\newcommand{\R}{\mathbb{R}}
\newcommand{\E}{\mathbb{E}}
\newcommand{\ps}{\mathcal{P}} 
\renewcommand{\d}{\mathrm{d}}
\newcommand{\F}{\mathcal{F}}
\renewcommand{\P}{\mathbb{P}} 
\newtheorem{theorem}{Theorem}[section]
\newtheorem{proposition}[theorem]{Proposition}
\newtheorem{lemma}[theorem]{Lemma}
\theoremstyle{definition}
\theoremstyle{remark}
\newtheorem{rem}[theorem]{Remark}
\title{\bf Optimal rate of convergence in the vanishing viscosity for quadratic Hamilton-Jacobi equations}
\begin{document}

\author[1]{Louis-Pierre \textsc{Chaintron}}
\author[2]{Samuel \textsc{Daudin}}
\affil[1]{\small DMA, École normale supérieure, Université PSL, CNRS, 75005 Paris, France}
\affil[2]{\small Université Paris Cité, Laboratoire Jacques-Louis Lions}

\date{}

\maketitle

\begin{abstract}
The purpose of this note is to provide an optimal rate of convergence in the vanishing viscosity regime for first-order Hamilton-Jacobi equations with purely quadratic Hamiltonian. We show that for a globally Lipschitz-continuous 
terminal condition the rate is of order $O (\varepsilon \log \varepsilon)$, and we provide an example to show that this rate cannot be sharpened. 
This improves on the previously known rate of convergence $O(\sqrt{\varepsilon})$, which was widely believed to be optimal. 
Our proof combines techniques involving regularization by sup-convolution with entropy estimates for the flow of a suitable version of the adjoint linearized equation.
The key technical point is an integrated estimate of the Laplacian of the solution against this flow. 
Moreover, we exploit the semiconcavity generated by the equation.
\end{abstract}

\tableofcontents

\section{Introduction}

Let us fix $T > 0$ and an integer $d \geq 1$. 
Given Lipschitz-continuous functions $f, g : \R^d \rightarrow \R$, we are interested in the $\varepsilon \rightarrow 0$ convergence of the Hamilton-Jacobi-Bellman equation (HJB)
\begin{equation}  \label{eq:HJBeps} 
\begin{cases}
- \partial_t \varphi^\varepsilon_t + \frac{1}{2} \vert \nabla \varphi^\varepsilon_t \vert^2 - f = \frac{\varepsilon}{2} \Delta \varphi^\varepsilon_t, \\
\varphi^\varepsilon_T = g.
\end{cases}
\end{equation}
It is a standard result \cite{friedman1973cauchy,amann1978some} that \eqref{eq:HJBeps} has a unique $\mathcal{C}^{1,2}$ solution $\varphi^\varepsilon : [0,T] \times \R^d \rightarrow \R$, whose gradient is bounded by the Lipschitz constants $L_f$, $L_g$ of $f$ and $g$, uniformly in $\varepsilon$:
\begin{equation} \label{eq:GradBound}
\forall \varepsilon \in (0,1], \qquad \sup_{(t,x) \in [0,T] \times \R^d} \vert \nabla \varphi^{\varepsilon}_t (x) \vert \leq L_g + T L_f, 
\end{equation} 
see e.g. \cite{lions1982generalized,lions1984two}. 
From the fundamental works \cite{evans1980solving,crandall1983viscosity,crandall1984some,bardi1997optimal}, $\varphi^\varepsilon$ converges uniformly on compact sets towards the unique viscosity solution of \begin{equation} \label{eq:HJB0}
\begin{cases}
-\partial_t \varphi^0_t + \frac{1}{2} \vert \nabla \varphi^0_t \vert^2 - f = 0, \\
\varphi^0_T = g.
\end{cases}
\end{equation}
In this setting, $\varphi^0_t$ inherits the gradient bound \eqref{eq:GradBound} from $\varphi^\varepsilon_t$ and is locally Lispchitz in time, so that $\varphi^0$ is Lebesgue-almost everywhere (a.e.) differentiable and \eqref{eq:HJB0} is satisfied a.e. A natural question is then to understand the speed of this approximation. 
The convergence rate below seems to be due to \cite{fleming1964convergence} and has been widely believed optimal.

\begin{theorem}[Sub-optimal rate] \label{thm:subopt}
There exists $C_{\mathrm{sub}} >0$ such that
\begin{equation}
\forall \varepsilon \in (0,1], \quad \sup_{(t,x) \in [0,T] \times \R^d} \vert \varphi^\varepsilon_t (x) - \varphi^0_t (x) \vert \leq C_{\mathrm{sub}} \sqrt{\varepsilon}. 
\end{equation}  
\end{theorem}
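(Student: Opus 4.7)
The plan is to exploit the fact that both $\varphi^\varepsilon$ and $\varphi^0$ admit optimal control representations which differ only by a Brownian perturbation of the state trajectories. By standard dynamic programming, using the Fenchel identity $\tfrac{1}{2} |p|^2 = -\inf_{\alpha \in \R^d} \{ \alpha \cdot p + \tfrac{1}{2} |\alpha|^2 \}$, one has
\[
\varphi^\varepsilon_t (x) = \inf_{\alpha} \E \Big[ \int_t^T \Big( \tfrac{1}{2} |\alpha_s|^2 + f(X^\varepsilon_s) \Big) \d s + g(X^\varepsilon_T) \Big],
\]
where $\alpha$ ranges over progressively measurable processes and $\d X^\varepsilon_s = \alpha_s\,\d s + \sqrt{\varepsilon}\,\d B_s$ with $X^\varepsilon_t = x$; the inviscid value $\varphi^0_t (x)$ admits the analogous Lax--Oleinik representation with deterministic dynamics $\dot{X}^0_s = \alpha_s$. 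Thanks to the gradient bound \eqref{eq:GradBound}, it suffices to consider controls bounded by $L := L_g + T L_f$ in either problem.

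For the inequality $\varphi^\varepsilon_t (x) \leq \varphi^0_t (x) + C \sqrt{\varepsilon}$, I would fix $\eta > 0$, pick a deterministic control $\alpha^0$ that is $\eta$-optimal for $\varphi^0_t (x)$, and insert it as an admissible control in the stochastic problem driven by the same $\alpha^0$. The two state curves then satisfy $X^\varepsilon_s - X^0_s = \sqrt{\varepsilon} (B_s - B_t)$ pathwise, hence $\E |X^\varepsilon_s - X^0_s| \leq \sqrt{\varepsilon d (s-t)}$ by Cauchy--Schwarz. Since the running cost $\tfrac{1}{2} |\alpha^0_s|^2$ is state-independent, the Lipschitz estimates on $f$ and $g$ then give
\[
\varphi^\varepsilon_t (x) \leq \varphi^0_t (x) + \eta + \sqrt{\varepsilon d} \Big( \tfrac{2}{3} L_f T^{3/2} + L_g T^{1/2} \Big),
\]
and sending $\eta \to 0$ yields the upper bound.

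For the reverse inequality, take $\alpha^\varepsilon$ progressively measurable and $\eta$-optimal for $\varphi^\varepsilon_t (x)$, and for each sample $\omega$ consider the ``de-noised'' deterministic curve $\gamma^\omega_s := x + \int_t^s \alpha^\varepsilon_r (\omega)\,\d r = X^\varepsilon_s (\omega) - \sqrt{\varepsilon} (B_s - B_t) (\omega)$, which is a valid trajectory for the deterministic problem driven by $s \mapsto \alpha^\varepsilon_s (\omega)$. The definition of $\varphi^0$ gives
\[
\varphi^0_t (x) \leq \int_t^T \Big( \tfrac{1}{2} |\alpha^\varepsilon_s (\omega)|^2 + f(\gamma^\omega_s) \Big) \d s + g(\gamma^\omega_T),
\]
and taking expectations, then controlling $|f(\gamma^\omega_s) - f(X^\varepsilon_s (\omega))|$ and $|g(\gamma^\omega_T) - g(X^\varepsilon_T (\omega))|$ by Lipschitz continuity, leads to $\varphi^0_t (x) \leq \varphi^\varepsilon_t (x) + \eta + C \sqrt{\varepsilon}$.

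The main technical delicacy lies in rigorously justifying the two control representations and the existence of bounded, measurable $\eta$-optimal controls. For $\varphi^\varepsilon$, this is a classical application of the verification theorem: the $\mathcal{C}^{1,2}$ regularity of $\varphi^\varepsilon$ together with \eqref{eq:GradBound} shows that the feedback $\alpha_s := - \nabla \varphi^\varepsilon_s (X^\varepsilon_s)$ is admissible and bounded by $L$. For $\varphi^0$, the existence of $\eta$-optimal Lipschitz trajectories follows from the standard Lax--Oleinik representation under the Lipschitz assumptions. A purely PDE alternative via doubling of variables, using the functional $\varphi^\varepsilon (t,x) - \varphi^0 (s,y) - |x-y|^2 / (2\delta) - (t-s)^2 / (2\delta)$, is also available but less transparent about the origin of the $\sqrt{\varepsilon}$ scaling, which in the stochastic proof is traced directly to $\E |B_s - B_t| = O(\sqrt{s-t})$.
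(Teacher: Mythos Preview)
Your argument is correct and is precisely the ``direct coupling argument of the value functions \eqref{eq:Vcontroleps}--\eqref{eq:Vcontrol0}'' that the paper invokes (without detailing it) at the end of Section~\ref{subsec:controlsto}; the paper treats Theorem~\ref{thm:subopt} as a classical result due to Fleming and does not give a standalone proof. As a side remark, the paper also notes (in the Remark following Theorem~\ref{thm:optSC}) that the $O(\sqrt{\varepsilon})$ lower bound can alternatively be recovered from its sup-convolution machinery by using only the crude estimate $\Delta \varphi^{0,\delta}_s \geq -d/\delta$ in \eqref{eq:StepBefore} and choosing $\delta = \sqrt{\varepsilon}$, while the upper bound is already $O(\varepsilon)$ by Lemma~\ref{lem:UppOpt} in the semiconcave case.
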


The main result of the present article is the following optimal rate for this convergence.

\begin{theorem}[Optimal rate] \label{thm:opt}
If $f$, $g$ are globally Lipschitz and semiconcave, then there exists $C_{\mathrm{opt}} >0$ such that
\begin{equation}
\forall \varepsilon \in (0,1], \quad \sup_{(t,x) \in [0,T] \times \R^d} \vert \varphi^\varepsilon_t (x) - \varphi^0_t (x) \vert \leq - C_{\mathrm{opt}} \, \varepsilon \log \varepsilon.
\end{equation}  
Furthermore, there exists an explicit constant $C(d,T,L_f,L_g,\lambda_f ,\lambda_g) > 0 $ such that
\[ \forall (t,x) \in [0,T] \times \R^d, \quad \varphi^\varepsilon_t (x) - \varphi^0_t (x) \geq \tfrac{d}{2} \varepsilon \log \varepsilon -C(d,T,L_f,L_g,\lambda_f, \lambda_g) \varepsilon, \]
where $\lambda_f$, $\lambda_g$ are the semiconcavity constants of $f$, $g$. 
If $f \equiv 0$, we further show that the same rate holds without assuming that $g$ is semiconcave.
\end{theorem}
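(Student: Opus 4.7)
The plan is to split the bound $|\varphi^\varepsilon - \varphi^0| \leq -C_{\text{opt}}\varepsilon\log\varepsilon$ into an easy upper bound $\varphi^\varepsilon - \varphi^0 \leq C\varepsilon$ (which is stronger than needed) and a delicate lower bound encoding the sharp refinement $\varphi^\varepsilon - \varphi^0 \geq \frac{d}{2}\varepsilon\log\varepsilon - C\varepsilon$.

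First I would establish the upper bound. The semiconcavity of $\varphi^0$, inherited via the Hopf--Lax representation from $\lambda_f, \lambda_g$, gives $\Delta\varphi^0 \leq d\lambda_0$ in the distributional sense, for some $\lambda_0 = \lambda_0(T, L_f, L_g, \lambda_f, \lambda_g)$. Then $\psi(t,x) := \varphi^0(t,x) + \frac{d\lambda_0\varepsilon}{2}(T-t)$ is readily checked to be a viscosity super-solution of \eqref{eq:HJBeps} with $\psi_T = g$, so the comparison principle yields $\varphi^\varepsilon \leq \psi$, which is in fact $O(\varepsilon)$, better than the claimed logarithmic rate.

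The lower bound is the substantive part. Fix $(t_0, x_0) \in [0,T)\times\R^d$. I would regularize $\varphi^\varepsilon$ by sup-convolution, $\varphi^{\varepsilon,\delta}(s,x) := \sup_y\{\varphi^\varepsilon(s,y) - |x-y|^2/(2\delta)\}$, obtaining a $C^{1,1}$ approximation of $\varphi^\varepsilon$ at scale $\delta$ for which the bound $\Delta\varphi^{\varepsilon,\delta} \leq C_{\text{sc}}$ holds uniformly in $\varepsilon, \delta$ thanks to the uniform semiconcavity of $\varphi^\varepsilon$ (the ``semiconcavity generated by the equation''). I would then introduce the flow $\rho^\alpha_s$ on $[t_0, T]$ given by the Fokker--Planck equation $\partial_s\rho^\alpha_s + \mathrm{div}(-\nabla\varphi^{\varepsilon,\delta}\,\rho^\alpha_s) - \frac{\varepsilon}{2}\Delta\rho^\alpha_s = 0$ with $\rho^\alpha_{t_0}$ a Gaussian of variance $\alpha I$ centered at $x_0$; its differential entropy is $H(\rho^\alpha_{t_0}) = \frac{d}{2}\log(2\pi e\alpha)$. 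Testing the HJB for $\varphi^{\varepsilon,\delta}$ (with an $O(\delta)$ error) and the viscosity sub-solution inequality for $\varphi^0$ against $\rho^\alpha_s$, and completing the square in $|\nabla\varphi^0 - \nabla\varphi^{\varepsilon,\delta}|^2$, would yield a duality bound whose error reduces to an integrated Laplacian $\int_{t_0}^T\!\int \Delta\varphi^0\, d\rho^\alpha_s\, ds$. The crucial entropy-production identity
\[ \frac{\varepsilon}{2}\int_{t_0}^T\mathcal{I}(\rho^\alpha_s)\, ds + H(\rho^\alpha_T) - H(\rho^\alpha_{t_0}) = \int_{t_0}^T\!\!\int \Delta\varphi^{\varepsilon,\delta}\, d\rho^\alpha_s\, ds \leq dC_{\text{sc}}(T-t_0), \]
combined with the decomposition $\Delta\varphi^0 = d\lambda_0 - \mu$ (with $\mu \geq 0$ a Radon measure concentrated on the shocks of $\varphi^0$) and integration-by-parts bounds on $\int \mu\, d\rho^\alpha_s$ in terms of $\sqrt{\mathcal{I}(\rho^\alpha_s)}$, would then produce, after choosing $\delta \sim \alpha \sim \varepsilon$,
\[ \int (\varphi^\varepsilon - \varphi^0)(t_0, \cdot)\, d\rho^\alpha_{t_0} \geq \frac{d}{2}\varepsilon\log\varepsilon - C\varepsilon, \]
the logarithmic constant being traceable to the initial Gaussian entropy $H(\rho^\alpha_{t_0})$. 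The pointwise bound at $(t_0, x_0)$ would follow by sending $\alpha \to 0$ and invoking the uniform Lipschitz continuity of $\varphi^\varepsilon - \varphi^0$.

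The hard part is the sharp coupling between the integrated-Laplacian estimate and the entropy--Fisher bound so that the constant $\frac{d}{2}$ emerges exactly rather than a loose $O(\varepsilon\log\varepsilon)$; this is where the uniform semiconcavity of $\varphi^\varepsilon$ enters critically, by keeping $\Delta\varphi^{\varepsilon,\delta}$ bounded above along the flow. For the final statement with $f \equiv 0$ and $g$ merely Lipschitz, the Hopf--Lax formula alone endows $\varphi^0$ with a $(T-t)^{-1}$-semiconcavity, so the argument above applies on $[0, T-\tau]$ with a $\tau$-dependent constant, while Theorem~\ref{thm:subopt} handles the terminal slab $[T-\tau, T]$; optimizing $\tau \sim \varepsilon$ recovers the same logarithmic rate.
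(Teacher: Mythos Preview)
Your upper bound is fine and matches the paper's Lemma~\ref{lem:UppOpt} up to which of the two solutions you use the semiconcavity of. The lower bound, however, has a genuine gap.

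The route you sketch --- integrate $\Delta\varphi^0$ by parts, bound the singular part $\mu$ by $\sqrt{\mathcal{I}(\rho_s^\alpha)}$ via Cauchy--Schwarz, and control $\int\mathcal{I}$ through the entropy--production identity --- does not close at the rate $\varepsilon\log\varepsilon$. Concretely, the entropy identity with drift $-\nabla\varphi^{\varepsilon,\delta}$ and $\alpha\sim\varepsilon$ gives $\tfrac{\varepsilon}{2}\int_{t_0}^T\mathcal I(\rho_s^\alpha)\,ds \lesssim -\tfrac{d}{2}\log\varepsilon + C$, hence $\int_{t_0}^T\sqrt{\mathcal I(\rho_s^\alpha)}\,ds \lesssim \sqrt{|\log\varepsilon|/\varepsilon}$, and the error term becomes $\tfrac{\varepsilon}{2}L\int\sqrt{\mathcal I}\lesssim \sqrt{\varepsilon|\log\varepsilon|}$, which is \emph{worse} than the classical $\sqrt{\varepsilon}$ rate. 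The paper anticipates exactly this pitfall: see the remark following Theorem~\ref{thm:optSC}, which explains that exploiting the Fisher information through Cauchy--Schwarz (equivalently, integrating the Laplacian by parts) recovers only the suboptimal rate because of the $\varepsilon$-prefactor in front of $|\nabla\log\mu|^2$. In addition, passing from your averaged bound $\int(\varphi^\varepsilon-\varphi^0)\,d\rho_{t_0}^\alpha$ to the pointwise value at $x_0$ costs $O(\sqrt\alpha)$ by Lipschitz continuity, which with $\alpha\sim\varepsilon$ is already $O(\sqrt{\varepsilon})$ and swamps any logarithmic gain.

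The paper's key idea, which you are missing, is twofold. First, one regularises $\varphi^0$ by sup-convolution (not $\varphi^\varepsilon$, which is already $\mathcal C^{1,2}$), obtaining $\varphi^{0,\delta}$. Second, and crucially, one runs the adjoint flow with the \emph{half-sum} drift $\psi^{\varepsilon,\delta}=\tfrac12(\varphi^\varepsilon+\varphi^{0,\delta})$, started from the Dirac mass at $x$. The quadratic identity $\tfrac12(|a|^2-|b|^2)=\tfrac{a+b}{2}\cdot(a-b)$ then makes the error term exactly $\tfrac{\varepsilon}{2}\int\!\!\int\Delta\varphi^{0,\delta}\,d\mu_s\,ds$, and the entropy evolution along the \emph{same} drift reads $\int\!\!\int\Delta\psi^{\varepsilon,\delta}\,d\mu = [\text{entropy difference}] + \tfrac{\varepsilon}{2}\int\mathcal I \geq [\text{entropy difference}]$. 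Since $\Delta\psi^{\varepsilon,\delta}=\tfrac12\Delta\varphi^\varepsilon+\tfrac12\Delta\varphi^{0,\delta}$ and $\Delta\varphi^\varepsilon\leq d\lambda$, one solves directly for $\tfrac12\int\!\!\int\Delta\varphi^{0,\delta}\,d\mu$ without any Cauchy--Schwarz loss; the sharp $\tfrac{d}{2}$ then falls out of the short-time entropy bound $\int\log\mu_{t+\tau}\,d\mu_{t+\tau}\leq -\tfrac{d}{2}\log(2\pi\varepsilon\tau)+O(\tau/\varepsilon)$ (Proposition~\ref{pro:Regularising}\ref{item:RegEnt}), choosing $\tau=\delta=\varepsilon$. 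This alignment between the drift in the flow and the Laplacian to be estimated is the whole point and is absent from your sketch.
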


Interestingly, the leading order term is given by a universal dimensional constant.
Our result is reminiscent of the rate of convergence of the cost in the entropic approximation of optimal transport \cite{Carlier2023}.
The latter question is related to the small noise limit in the Schrödinger bridge problem \cite{leonard2013survey}, which can be rephrased as a stochastic control problem of planning type, the initial and terminal laws being imposed.
In contrast, our control interpretation of \eqref{eq:HJBeps} lets the terminal law free but adds a terminal cost, see Section \ref{subsec:controlsto}. 
Our result is also related to the large deviation theory, since Theorem \ref{thm:opt} gives the optimal rate of convergence for the exponential moment $-\varepsilon \log \E [ e^{-g(\sqrt{\varepsilon} B_t) / \varepsilon} ]$ of a rescaled Brownian motion (Schilder's theorem), see \cite{feng2006large} for the use of \eqref{eq:HJBeps} in large deviation theory.

The proof of Theorem \ref{thm:opt} relies on the regularising effect of a suitable stochastic flow combined with the semiconcavity properties of the value functions.
The main technical novelty is Proposition \ref{pro:Regularising}-\ref{item:LapLow}, which allows for estimating integrals of the kind $\int_{t}^T \int_{\R^d} \Delta \psi_s \d \mu_s \d s$ when $\mu_s$ is the solution of the Fokker-Planck equation $\partial_s \mu_s = \nabla \cdot [ \mu_s \nabla \psi_s] + \Delta \mu_s$.  
We provide an example in Section \ref{sec:Example}, which shows that the rate cannot be better than $O(\varepsilon \log \varepsilon)$ and that the constant cannot be lower than $(d-1)/2$. 
Our result demonstrates that the purely quadratic Hamiltonian actually enhances the speed of convergence. 
To wit, in the  simplest setting without non-linearity, the optimal rate cannot be better than $\sqrt{\varepsilon}$. 
The case of semiconcave data is handled by Theorem \ref{thm:optSC} in Section \ref{subsec:SC}. 
This assumption is removed in Section \ref{subsec:NSC} when $f \equiv 0$, where we complete the proof of Theorem \ref{thm:opt}.

\subsection{A long standing history}

The convergence of \eqref{eq:HJBeps} towards \eqref{eq:HJB0} has been studied by many authors, among which \cite{Hopf1950ThePD,oleinik1963discontinuous,varadhan1966asymptotic}.
We also refer to \cite{kruzhkov1964cauchy,kuznetsov1964higher} for generalizations to systems of conservation laws.
Indeed, for $d=1$, $\nabla \varphi^0$ corresponds -- up to time-reversal -- to the unique entropy solution of the Burgers equation, which can also be obtained by vanishing viscosity methods \cite{kruvzkov1970first,lax2005hyperbolic}.

To our knowledge, the convergence rate $O(\sqrt{\varepsilon})$ was first obtained by \cite{fleming1964convergence} using a differential game approach. 
Within the framework of viscosity solutions, \cite{crandall1983viscosity,lions1984two} established it for general Hamiltonians using the doubling variables method. 
For some types of non-convex Hamiltonians, this rate is shown to be optimal in \cite{qian2024optimal}.
\cite{bardi1997optimal} further showed that this rate deteriorates if the terminal data is only Hölder-continuous. 
When the Hamiltonian is convex, a natural approach to these questions relies on the stochastic control interpretation of $\varphi^\varepsilon_t$, see Section \ref{subsec:controlsto} below.
Some other proofs leverage the nonlinear adjoint method introduced by \cite{evans2010adjoint,tran2011adjoint}.

When the Hamiltonian is uniformly convex -- as in \eqref{eq:HJBeps} --, better rates can be expected if the solution is semiconcave.
In this setting, for a semiconcave terminal condition, a $O(\varepsilon)$ upper bound is known to hold \cite{lions1982generalized,calder2018lecture,camilli2023quantitative}, see Lemma \ref{lem:UppOpt} below -- when $g$ is no more semiconcave, we further prove a  deteriorated version of this upper bound in Proposition \ref{pro:UppNSC}.
Using the non-linear adjoint method, \cite{tran2021hamilton} obtained a $O(\varepsilon)$ rate for suitable averages of $\varphi^\varepsilon - \varphi^0$.
Working in the torus, \cite{camilli2023quantitative} obtained a $O(\varepsilon)$ rate in $L^\infty ( L^1 )$.
See also \cite{Tang-Teng} for a similar one-dimensional result for conservation laws.
The global rate $O(\varepsilon \log \varepsilon)$ had already been obtained in dimension $d=1$ for purely quadratic Hamiltonians and Lipschitz terminal data, see \cite[Proposition 4.4]{qian2024optimal}, which relies on the Cole-Hopf-Cole formula \eqref{eq:HopfC} for $\varphi^\varepsilon$ and the Hopf-Lax formula for $\varphi^0$.
The works 
\cite{droniou2006fractal,goffi2024remarks} have studied regularizing effects of \eqref{eq:HJBeps} when the usual Laplacian is replaced by a non-local fractional Laplacian.
More precisely, \cite{goffi2024remarks} computed optimal rates in this case and deduced a $O(\varepsilon)$ rate for \eqref{eq:HJBeps} in $L^\infty([0,T],L^p( \R^d))$ for every $1 < p < +\infty$. 
To the best our knowledge, Theorem \ref{thm:opt} is the first result giving the lower bound in every dimension, which has been an open question for a long time. However, we have been informed that the optimal rate of convergence for uniformly convex Hamiltonians has been recently obtained in an incoming work independent of ours \cite{CirGoffOpt}.

Since the early works of Fleming, it was realized that the rate of convergence deteriorates around the singularities of the solution to the first order Hamilton-Jacobi equation. In the special case where the terminal condition $g$ is $\mathcal{C}^{1,1}$ and convex, then the solution $\varphi_t^0$ is actually $\mathcal{C}^{1,1}$ for all $t \in [0,T]$, and a simple comparison argument shows that the rate of convergence is of order $O(\varepsilon)$. When $g$ is not convex, the solution can develop singularities. 
In this case, from \cite[Example 10.2']{fleming1971stochastic} in a similar setting, we cannot expect a global rate better than $O(\varepsilon\log\varepsilon)$. On the other hand, the solution is always semiconcave and singularities of semiconcave functions cannot be too numerous. The optimal rate $O(\varepsilon)$ can then  be achieved if we restrict ourselves to points $(t,x)$ in the \emph{strong regularity region}, which is known to be a dense open subset of $[0,T] \times \R^d$ \cite[Theorem 2]{fleming1964cauchy} and \cite[Chapters 4-5]{cannarsa2004semiconcave}. We refer to \cite{fleming1964cauchy,fleming1971stochastic} for further results and an asymptotic expansion of $\varphi^\varepsilon_t$ w.r.t. $\varepsilon$ within the strong regularity region.
These works rely on a probabilistic stochastic control approach. See \cite{fleming1986asymptotic} for an analogous result  using viscosity solution methods.

\subsection{Stochastic control interpretation} \label{subsec:controlsto}

Let us fix a filtered probability space $(\Omega,\F,(\F_t)_{0 \leq t \leq T}, \P)$. For $\varepsilon > 0$, we consider the controlled dynamics
\begin{equation} \label{eq:trajeps}
 X^{\varepsilon,\alpha}_s := x + \int_t^s \alpha_s \d s + \sqrt{\varepsilon} B_s, \quad t \leq s \leq T, 
\end{equation}
where $(B_s)_{t \leq s \leq T}$ is a $(\F_t)_{0 \leq t \leq T}$-Brownian motion.
The control $\alpha = (\alpha_s)_{t \leq s \leq T}$ is now a stochastic process, which is progressively measurable and square-integrable.
Leveraging the regularity of $\varphi^\varepsilon$ and Bellman's dynamic programming principle, it is well-known that $\varphi^\varepsilon$ corresponds to the the value function of a stochastic control problem,
\begin{equation} \label{eq:Vcontroleps}
\varphi^\varepsilon_t ( x ) = \inf_{\alpha, X^{\varepsilon,\alpha}_t = x} \E \bigg[ \int_t^T \frac{1}{2} \vert \alpha_s \vert^2 + f ( X^{\varepsilon,\alpha}_s ) \d s + g ( X^{\varepsilon,\alpha}_T ) \bigg],
\end{equation} 
see e.g. the textbook \cite{fleming2006controlled} on stochastic control. The unique optimal curve is the pathwise unique solution of the stochastic differential equation (SDE)
\begin{equation} \label{eq:opteps}
\overline{X}^\varepsilon_s = x - \int_t^s \nabla \varphi^\varepsilon_s ( \overline{X}^\varepsilon_s ) \d s + \sqrt{\varepsilon}  B_s, 
\end{equation} 
so that $-\nabla \varphi^\varepsilon_t$ provides the optimal control in feed-back form.

Sending $\varepsilon \rightarrow 0$, $\varphi^0_t (x)$ is also the value function of a deterministic control problem,
\begin{equation} \label{eq:Vcontrol0}
\varphi^0_t ( x ) = \inf_{\alpha, X^{0,\alpha}_t = x} \, \int_t^T \frac{1}{2} \vert \alpha_s \vert^2 + f ( X^{0,\alpha}_s ) \d s + g ( X^{0,\alpha}_T ),
\end{equation} 
where we here minimise over \emph{deterministic controls} $\alpha \in L^2 ( (t,T), \R^d )$.
However, this problem does not have a unique optimal solution in general, unless $f$ and
$g$ are convex functions.
This lack of uniqueness is direclty related to the lack of regularity of $\varphi^0_t$, which is not $\mathcal{C}^1$ in general.
At this stage, a direct coupling argument of the value functions \eqref{eq:Vcontroleps}-\eqref{eq:Vcontrol0} yields the suboptimal rate $O(\sqrt{\varepsilon})$.

\subsection{Comparison with mean-field control} \label{subsec:MFC}

The search for the optimal rate of convergence in the vanishing viscosity problem has sparked renewed interest in recent years, partly due to analogous questions arising in mean-field optimal control theory. Mean-field optimal control theory focuses on optimal control problems for large populations of $N \geq 1$ interacting particles. A central question in this theory is to quantitatively understand the regime $N \rightarrow +\infty$. This can be approached using PDE techniques, particularly at the level of the value functions for the different problems. In certain settings, where each particle is subject to its own idiosyncratic Brownian noise, the pre-limit problem can be viewed as a noisy perturbation of the limit problem, which is formulated over a space of probability measures. In this context, the convergence problem in mean-field control theory can be seen as an infinite-dimensional analog of the vanishing viscosity problem, where the viscosity intensity is given by $ \varepsilon = 1/N$. This connection has inspired a recent line of research applying well-established techniques from the vanishing viscosity problem to the convergence problem in mean-field control theory, as seen in \cite{cdjs2023,ddj2023,cjms2023,cecchin2025quantitativeconvergencemeanfield}. The analogy extends further, as the vanishing viscosity problem can itself be viewed as a special case of the mean-field problem when the interaction is solely through the mean position of the population, as demonstrated in \cite[Example 2.13]{ddj2023}. Consequently, the optimal rate of convergence in the vanishing viscosity regime provides a lower bound on the optimal rate in the mean-field regime. In particular, Theorem \ref{thm:opt} shows that the convergence rate in \cite[Theorem 2.7]{ddj2023} -- in the so-called \textit{regular} case where the terminal cost is Lipschitz and semi-concave with respect to sufficiently weak metrics -- could be better than $N^{-1/2}$ but not better than $N^{-1} \log(N)$. Adapting the techniques presented in this paper to the mean-field setting is an exciting yet undoubtedly challenging prospect, which we leave for future work.

\subsection{Extensions and open questions}

The present article is only concerned with establishing the optimal rate of convergence for \eqref{eq:HJBeps}. 
We restricted ourselves to a minimal framework to favor the clarity of exposition. 
Extending these results to more general settings will be the goal of future works, and we list below some appealing perspectives.

\begin{itemize}
\item A straightforward extension would replace \eqref{eq:HJBeps} by the HJB equation
\[ - \partial_t \varphi^\varepsilon_t - b_t \cdot \nabla \varphi^\varepsilon_t + \frac{1}{2} \vert \sigma^\top_t \nabla \varphi^\varepsilon_t \vert^2 - f_t = \frac{\varepsilon}{2} \mathrm{Tr}[\sigma_t \sigma^\top_t \nabla^2 \varphi^\varepsilon_t ], \]
for Lipschitz coefficients $b :[0,T] \times \R^d \rightarrow \R^d$ and $\sigma :[0,T] \times \R^{d} \rightarrow \R^{d \times d}$.
This amounts to replacing the controlled dynamics \eqref{eq:trajeps} by
\[ \d X^\alpha_t = b ( X^\alpha_t ) \d t + \sigma_t ( X^\alpha_t ) \alpha_t \d t + \sqrt{\varepsilon}  \sigma_t ( X^\alpha_t ) \d B_t. \]
If $\sigma$ is globally bounded and uniformly elliptic, we anticipate that our proofs easily adapt.
The optimal feedback control in this case is given by $- \sigma^\top_t \nabla \varphi^\varepsilon_t$, and a gradient estimate similar to \eqref{eq:GradBound} was proved in \cite{chaintron2023existence}.
\item A natural question is to identify the Hamiltonians $\mathcal{H}$ for which Theorem \ref{thm:opt} extends to 
\[ -\partial_t \varphi^\varepsilon_t (x) + \mathcal{H}(t,x,\varphi^\varepsilon_t (x),\nabla \varphi^\varepsilon_t (x)) = \frac{\varepsilon}{2} \Delta \varphi^\varepsilon_t (x), \]
in place of \eqref{eq:HJBeps}. We emphasize that our proof relies on the purely quadratic structure of the Hamiltonian  -- for instance to derive \eqref{eq:KeyLower} -- and that adapting it for other Hamiltonians satisfying the assumptions of e.g. \cite{fleming1971stochastic,fleming1986asymptotic} would require new ideas. We could also think about rescaled non-linearities with respect to $\nabla^2 \varphi^\varepsilon_t$.
Homogenization problems in the spirit of \cite{qian2024optimal} would be interesting too, as well as adding boundary conditions.

\item The fundamental obstruction to the smooth-setting $O( \varepsilon )$ rate is the lack of regularity of $\varphi^0_t$.
It would be illuminating to relate the $O( \varepsilon \log \varepsilon )$ rate (and the pre-factor $d/2$) to the singularities of $\nabla \varphi^0_t$. 
Such a study will likely require a thorough analysis of the singularities in the spirit of \cite{cannarsa2004semiconcave}. 
\item We eventually refer to Section \ref{subsec:MFC} for the promising perspective of extending our results to the mean-field control setting in the spirit of \cite{ddj2023}. 
\end{itemize}

\section{Proof of the main results}

\subsection{Semiconcave setting}  \label{subsec:SC}

Let $L_f$, $L_g$ denote the Lipschitz constants of $f$, $g$, and let $\lambda_f$, $\lambda_g$ denote their semiconcavity constants.
We set $L := L_g + TL_f $ and $\lambda := \lambda_g + T \vert \lambda_f \vert$.
From the representation formula \eqref{eq:Vcontroleps}, we infer that $\varphi^\varepsilon_t$ is $\lambda$-semiconcave for every $t \in [0,T]$, and this property extends to $\varepsilon =0$. 
This is the well-known property that HJB equations propagate semiconcavity \cite{cannarsa2004semiconcave}.
The following estimate is a known fact. 

\begin{lemma}[Upper bound] \label{lem:UppOpt}
For every $\varepsilon \in (0,1]$,
\begin{equation*} \label{eq:KnownUpper}
\forall (t,x) \in [0,T] \times \R^d, \quad \varphi^{\varepsilon}_t ( x) - \varphi^{0}_t ( x) \leq \frac{(T - t )d \lambda}{2} \varepsilon.
\end{equation*}
\end{lemma}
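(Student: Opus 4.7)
The plan is to use the stochastic control representations \eqref{eq:Vcontroleps}-\eqref{eq:Vcontrol0} together with the Jensen-type inequality that the semiconcavity assumption supplies. Fix $(t,x) \in [0,T] \times \R^d$ and $\delta > 0$, and pick a deterministic control $\alpha \in L^2((t,T);\R^d)$ that is $\delta$-optimal for \eqref{eq:Vcontrol0}, with associated trajectory $X^{0,\alpha}_s = x + \int_t^s \alpha_r \d r$. Using the same $\alpha$ as a (suboptimal) control in the noisy problem \eqref{eq:Vcontroleps} produces the trajectory $X^{\varepsilon,\alpha}_s = X^{0,\alpha}_s + \sqrt{\varepsilon}(B_s - B_t)$, so $\varphi^\varepsilon_t(x)$ is upper bounded by the expectation of the associated stochastic cost.

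The key step is comparing $\E[f(X^{\varepsilon,\alpha}_s)]$ and $\E[g(X^{\varepsilon,\alpha}_T)]$ with their deterministic counterparts. Since $f$ is $|\lambda_f|$-semiconcave, $\tilde f := f - \tfrac{|\lambda_f|}{2}|\cdot|^2$ is concave, and Jensen's inequality combined with $\E[B_s - B_t] = 0$ and $\E[|B_s - B_t|^2] = (s-t)d$ gives
\[
\E[f(X^{\varepsilon,\alpha}_s)] = \E[\tilde f(X^{\varepsilon,\alpha}_s)] + \tfrac{|\lambda_f|}{2}\E[|X^{\varepsilon,\alpha}_s|^2] \leq f(X^{0,\alpha}_s) + \tfrac{|\lambda_f| \varepsilon (s-t)d}{2},
\]
and analogously $\E[g(X^{\varepsilon,\alpha}_T)] \leq g(X^{0,\alpha}_T) + \tfrac{\lambda_g \varepsilon (T-t)d}{2}$. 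The running control cost $\E[\tfrac12 |\alpha_s|^2] = \tfrac12 |\alpha_s|^2$ is unchanged since $\alpha$ is deterministic.

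Summing these estimates, $\varphi^\varepsilon_t(x)$ is bounded above by the deterministic cost of $\alpha$ plus $\int_t^T \tfrac{|\lambda_f|\varepsilon(s-t)d}{2} \d s + \tfrac{\lambda_g \varepsilon (T-t)d}{2}$. By choice of $\alpha$ the deterministic cost is at most $\varphi^0_t(x) + \delta$, and sending $\delta \to 0$ together with the crude estimate $(T-t)^2/4 \leq T(T-t)/2$ collapses everything into $\tfrac{(T-t)d\lambda \varepsilon}{2}$ with $\lambda = \lambda_g + T|\lambda_f|$. The only subtlety I anticipate is bookkeeping the sign of $\lambda_f$, which the $|\lambda_f|$-semiconcavity absorbs. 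A purely analytic alternative would be to observe that $\psi(t,x) := \varphi^0_t(x) + \tfrac{(T-t)d\lambda \varepsilon}{2}$ is a (distributional) supersolution of \eqref{eq:HJBeps} with terminal data $g$---using $\Delta \varphi^0 \leq d\lambda$ from semiconcavity---and conclude by comparison, but the stochastic route sidesteps the technicalities of applying a comparison principle to a merely Lipschitz semiconcave $\varphi^0$.
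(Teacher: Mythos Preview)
Your stochastic control argument is correct. The paper takes the viscosity-solution route instead: it uses the $\lambda$-semiconcavity of the \emph{smooth} function $\varphi^{\varepsilon}_t$ (inherited from $f,g$) to get $\Delta\varphi^{\varepsilon}_t\leq d\lambda$ pointwise, observes that $(t,x)\mapsto\varphi^{\varepsilon}_t(x)-\tfrac{\varepsilon(T-t)d\lambda}{2}$ is therefore a viscosity sub-solution of the first-order equation \eqref{eq:HJB0}, and concludes by comparison with $\varphi^{0}$. The paper explicitly remarks that ``a stochastic control proof is as well possible,'' and that is precisely what you carried out: take a $\delta$-optimal deterministic control for $\varphi^{0}$, reuse it in the noisy problem, and exploit semiconcavity of $f,g$ via Jensen. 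Your approach is slightly more elementary in that it never touches viscosity comparison and uses only the semiconcavity of the \emph{data}; the paper's approach is shorter because it absorbs the propagation of semiconcavity into the single bound $\Delta\varphi^{\varepsilon}\leq d\lambda$ for a $\mathcal{C}^{1,2}$ function. Your ``purely analytic alternative'' is the mirror image of the paper's proof: you propose showing $\varphi^{0}+\tfrac{(T-t)d\lambda\varepsilon}{2}$ is a supersolution of the viscous equation using $\Delta\varphi^{0}\leq d\lambda$, whereas the paper shows $\varphi^{\varepsilon}-\tfrac{(T-t)d\lambda\varepsilon}{2}$ is a subsolution of the inviscid one using $\Delta\varphi^{\varepsilon}\leq d\lambda$. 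The paper's direction is cleaner precisely for the reason you identify---it avoids interpreting $\Delta\varphi^{0}$ for a merely Lipschitz semiconcave function.
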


For the sake of completeness, we provide the classical proof below, which relies on viscosity solution theory. 
A stochastic control proof is as well possible, as we do for the lower bound.

\begin{proof}
By semiconcavity, $\Delta \varphi^\varepsilon_t \leq d\lambda$, so that $(t,x) \mapsto \varphi^\varepsilon (t,x) - \varepsilon(T -  t) d\lambda / 2$ is a viscosity sub-solution of \eqref{eq:HJB0}.
By comparison with the viscosity (super-)solution $\varphi^0$, the result follows. 
\end{proof}

The main difficulty to prove the upper bound is the lack of semiconvexity for $\varphi^0_t$.
To circumvent this, we regularise $\varphi^0_t$ using the standard sup-convolution \cite{lasry1986remark}, for $\delta >0$,
\[ \varphi^{0,\delta}_t (x) := \sup_{y \in \R^d} \varphi^{0,\delta}_t (y) - \frac{1}{2 \delta} \vert x - y \vert^2. \]
The following lemma recalls known properties of sup-convolutions, see e.g. \cite{calder2018lecture,tran2021hamilton}.

\begin{lemma}[Sup-convolution]$\label{lem:supconv} \phantom{a}$
\begin{enumerate}[label=(\roman*),ref=(\roman*)]
\item\label{item:supReg} $\varphi^{0,\delta}_t$ is $L$-Lipschitz, $\lambda$-semiconcave, and $-1/\delta$-semiconvex. In particular, $\varphi^{0,\delta}_t$ is $\mathcal{C}^{1,1}$.
\item\label{item:supAppr} $\forall (t,x) \in [0,T] \times \R^d, \quad 0 \leq \varphi^{0,\delta}_t (x) - \varphi^0_t (x) \leq 2 L \delta$.
\item $\varphi^{0,\delta}_t$ inherits the sub-solution property for \eqref{eq:HJB0}, so that for a.e. $t \in [0,T]$,
\begin{equation} \label{eq:convolSub}
\forall x \in \R^d, \quad - \partial_t \varphi^{0,\delta}_t (x) + \frac{1}{2} \vert \nabla \varphi^{0,\delta}_t (x) \vert^2 -f \leq 2 \delta L L_f,
\end{equation} 
recalling that $\varphi^{0,\delta}$ is locally Lipschitz in $t$ and $\mathcal{C}^{1,1}$ in $x$. 
\end{enumerate}
\end{lemma}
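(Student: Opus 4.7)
The plan is to establish \ref{item:supReg}--\ref{item:supAppr}--(iii) in sequence, using standard sup-convolution (Moreau--Yosida--Lasry--Lions) machinery.

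For \ref{item:supReg}, the $L$-Lipschitz constant of $\varphi^{0,\delta}_t$ follows from the translation invariance of the quadratic penalty, $y \mapsto y + (x'-x)$, combined with the $L$-Lipschitz bound on $\varphi^0_t$ inherited from \eqref{eq:GradBound}. The $\lambda$-semiconcavity transfers from $\varphi^0_t$ because each map $x \mapsto \varphi^0_t(y) - \frac{1}{2\delta}|x-y|^2$ is $\lambda$-semiconcave (the Hessian in $x$ equals $-\delta^{-1}I \leq \lambda I$ since $\lambda \geq 0$), and pointwise suprema preserve semiconcavity with a uniform modulus. The semiconvexity comes from the identity
\[ \varphi^{0,\delta}_t(x) + \frac{|x|^2}{2\delta} = \sup_{y\in\R^d}\Big\{\varphi^0_t(y) - \frac{|y|^2}{2\delta} + \frac{x\cdot y}{\delta}\Big\}, \]
whose right-hand side is convex in $x$ as a supremum of affine functions; this is precisely the $(-1/\delta)$-semiconvexity. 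Being simultaneously semiconcave and semiconvex forces $\mathcal{C}^{1,1}_{\mathrm{loc}}$ regularity by the standard Alexandrov-type reasoning.

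For \ref{item:supAppr}, the lower bound $\varphi^{0,\delta}_t(x) \geq \varphi^0_t(x)$ follows by testing $y=x$. For the upper bound, a maximizer $y^*$ exists by coercivity of the penalty, and the inequality $\varphi^0_t(y^*) - \frac{1}{2\delta}|y^*-x|^2 \geq \varphi^0_t(x)$ combined with the $L$-Lipschitz bound on $\varphi^0_t$ yields $|y^* - x| \leq 2L\delta$. Substituting back gives $\varphi^{0,\delta}_t(x) - \varphi^0_t(x) \leq L|y^*-x|$, which is of order $O(\delta)$.

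For (iii), the main technical step, I fix $(t,x)$ in the full-measure set where $\varphi^{0,\delta}$ is differentiable in both variables. A maximizer $y^* = y^*(t,x)$ with $|y^*-x| \leq 2L\delta$ exists; the semiconcavity of $\varphi^0_t$ (which ensures differentiability off a null set) together with a measurable selection argument allows me to choose $y^*$ at a point of differentiability of $\varphi^0_t$ for a.e. $(t,x)$. The first-order condition in $y$ then yields $\nabla\varphi^0_t(y^*) = (x-y^*)/\delta = \nabla\varphi^{0,\delta}_t(x)$, and the Danskin/envelope principle gives $\partial_t\varphi^{0,\delta}_t(x) = \partial_t\varphi^0_t(y^*)$ at a.e. $t$. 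Inserting these identities in the equation \eqref{eq:HJB0} satisfied a.e. by $\varphi^0$ at $(t,y^*)$ produces
\[ -\partial_t\varphi^{0,\delta}_t(x) + \tfrac{1}{2}|\nabla\varphi^{0,\delta}_t(x)|^2 = f(y^*) \leq f(x) + L_f|y^*-x| \leq f(x) + 2LL_f\,\delta, \]
which is the desired inequality \eqref{eq:convolSub}. The main obstacle lies precisely in rigorously justifying the envelope identity and the selection of $y^*$ at a point of differentiability of $\varphi^0_t$, since $\varphi^0$ is only differentiable almost everywhere; a Fubini argument combined with the fact that the $\mathcal{C}^{1,1}$ regularity of $\varphi^{0,\delta}$ forces the map $x \mapsto y^*(t,x)$ to behave like a Lipschitz selection resolves the issue.
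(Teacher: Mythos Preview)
The paper does not actually prove this lemma: it is stated as a collection of known facts, with a pointer to the cited lecture notes and textbook. Your sketch therefore supplies more detail than the paper itself, and for the Lipschitz bound, the $(-1/\delta)$-semiconvexity, \ref{item:supAppr}, and (iii) you follow the standard route one finds in those references (envelope theorem at a maximizer, first-order condition $\nabla\varphi^0_t(y^*)=(x-y^*)/\delta=\nabla\varphi^{0,\delta}_t(x)$, then a Lipschitz shift of $f$); this is correct in outline.

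There is, however, a genuine error in your argument for the $\lambda$-semiconcavity in \ref{item:supReg}. You assert that ``pointwise suprema preserve semiconcavity with a uniform modulus.'' This is false: semiconcavity is concavity up to a quadratic correction, and suprema of concave functions are in general not concave---already $\max(x,-x)=|x|$ fails to be semiconcave. Suprema preserve \emph{convexity}, which is precisely why your semiconvexity argument works, but the dual claim does not hold. The correct proof of semiconcavity of the sup-convolution exploits its specific structure rather than a generic supremum: given $x_1,x_2$ with maximizers $y_1,y_2$, one tests $\bar y=\tfrac{1}{2}(y_1+y_2)$ at $\bar x=\tfrac{1}{2}(x_1+x_2)$, applies the $\lambda$-semiconcavity of $\varphi^0_t$ to the triple $y_1,y_2,\bar y$, and uses the parallelogram identity for the quadratic penalty. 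Carrying this out yields semiconcavity with constant $\lambda/(1-\lambda\delta)$ rather than $\lambda$ (the example $\varphi^0_t(y)=\tfrac{\lambda}{2}|y|^2$, for which $\varphi^{0,\delta}_t(x)=\tfrac{\lambda}{2(1-\lambda\delta)}|x|^2$, shows this is sharp), so the statement of the lemma is itself slightly imprecise. The discrepancy is harmless for the paper, since the semiconcavity of $\varphi^{0,\delta}_t$ is never used downstream---only its $(-1/\delta)$-semiconvexity enters, via \eqref{eq:LastConvS}.
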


Introducing the half-sum $\psi^{\varepsilon,\delta} := [ \varphi^{0,\delta} + \varphi^{\varepsilon} ]/2$, the key idea of our proof is to compute $\varphi^\varepsilon_s - \varphi^0_s$ along the solution of the stochastic differential equation (SDE)
\begin{equation} \label{eq:OInterpSDE}
\d Y^{\varepsilon,\delta,x}_s = - \nabla \psi^{\varepsilon,\delta}_s ( Y^{\varepsilon,\delta,x}_s ) \d s + \sqrt{\varepsilon} \d B_s, \quad t \leq s \leq T,
\end{equation} 
starting at $Y^{\varepsilon,\delta,x}_{t} = x$, referring to the setting of Section \ref{subsec:controlsto}. 
Since $\nabla \psi^{\varepsilon,\delta}$ is globally Lipschitz, this equation has a pathwise unique solution.
We now look at the law $\mu^{\varepsilon,\delta,x}_s \in \ps ( \R^d )$ of $Y^{\varepsilon,\delta,x}_s$. 

\begin{proposition}[Key regularizing effect] \label{pro:Regularising}
For every $\tau \in (0,T-t]$, $\mu^{\varepsilon,\delta,x}_{t+\tau} \in \ps ( \R^d )$ has a positive density, still denoted by $\mu_{t +\tau}^{\varepsilon,\delta,x} \in L^1 ( \R^d )$, which satisfies
\begin{enumerate}[label=(\roman*),ref=(\roman*)]
\item\label{item:RegEnt} 
$ - \infty < \int_{\R^d} \log \mu^{\varepsilon,\delta,x}_{t+\tau} \d \mu^{\varepsilon,\delta,x}_{t+\tau} \leq- \frac{d}{2} \log ( 2 \pi \varepsilon \tau ) + \frac{\tau}{2 \varepsilon} L^2$,
\item\label{item:LapLow} $\frac{1}{2} \int_{t+\tau}^T \int_{\R^d} \Delta \varphi^{0,\delta}_s \d \mu^{\varepsilon,\delta, x}_s \d s \geq \frac{d}{2} \log ( 2 \pi \varepsilon \tau ) - \frac{\tau}{2 \varepsilon} L^2 - \frac{d}{2} \log (2 \pi \varepsilon ) - \frac{(T-t-\tau) d \lambda}{2} - 2 d e^{4 L^2 T^2}$.
\end{enumerate} 
\end{proposition}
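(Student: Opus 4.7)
The plan is to combine an entropy production identity for the Fokker--Planck equation satisfied by $\mu^{\varepsilon,\delta,x}$ with Girsanov--Cameron--Martin estimates and the semiconcavity of $\varphi^\varepsilon$. For part \ref{item:RegEnt}, positivity of the density $\mu_{t+\tau}^{\varepsilon,\delta,x}$ follows from hypoellipticity (bounded smooth drift $-\nabla\psi^{\varepsilon,\delta}$ against non-degenerate noise $\sqrt{\varepsilon}$). For the quantitative upper bound, let $\nu^{\varepsilon,x}_{t+\tau}=\mathcal{N}(x,\varepsilon\tau I_d)$ denote the law of the driftless reference process $x+\sqrt{\varepsilon}(B_{t+\tau}-B_t)$. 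Since $|\nabla\psi^{\varepsilon,\delta}_s|\leq L$ by \eqref{eq:GradBound} and Lemma \ref{lem:supconv}\ref{item:supReg}, Girsanov's theorem gives the relative entropy bound
\[
H\bigl(\mu_{t+\tau}^{\varepsilon,\delta,x}\,\big|\,\nu^{\varepsilon,x}_{t+\tau}\bigr) \leq \tfrac{\tau L^2}{2\varepsilon}.
\]
The decomposition $H(\mu)=H(\mu|\nu)+\int \log\nu\,\d\mu$, together with the exact identity $\int\log\nu^{\varepsilon,x}_{t+\tau}\,\d\mu = -\tfrac{d}{2}\log(2\pi\varepsilon\tau) - \tfrac{1}{2\varepsilon\tau}\E|Y^{\varepsilon,\delta,x}_{t+\tau}-x|^2 \leq -\tfrac{d}{2}\log(2\pi\varepsilon\tau)$, yields the claimed upper bound. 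For finiteness from below, the same comparison against any fixed-variance reference Gaussian combined with the crude second-moment estimate $\E|Y^{\varepsilon,\delta,x}_{t+\tau}-x|^2 \leq L^2\tau^2+\varepsilon\tau d$ suffices.

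For part \ref{item:LapLow}, the key tool is the entropy production identity. Using $1+\log \mu_s$ as a test function in the Fokker--Planck equation $\partial_s\mu_s=\nabla\cdot(\mu_s\nabla\psi^{\varepsilon,\delta}_s)+\tfrac{\varepsilon}{2}\Delta\mu_s$ (justified via mollification), one obtains
\[
\tfrac{\d}{\d s}H(\mu_s) \;=\; \int\Delta\psi^{\varepsilon,\delta}_s\,\d\mu_s - \tfrac{\varepsilon}{2} I(\mu_s),\qquad I(\mu):=\int|\nabla\log\mu|^2\,\d\mu.
\]
Integrating on $[t+\tau,T]$ and substituting $\psi^{\varepsilon,\delta}=\tfrac{1}{2}(\varphi^{0,\delta}+\varphi^\varepsilon)$ rewrites the target quantity as
\[
\tfrac{1}{2}\int_{t+\tau}^T\!\int\Delta\varphi^{0,\delta}_s\,\d\mu_s\,\d s \;=\; H(\mu_T)-H(\mu_{t+\tau}) + \tfrac{\varepsilon}{2}\!\int_{t+\tau}^T\!I(\mu_s)\,\d s - \tfrac{1}{2} \!\int_{t+\tau}^T\!\int\Delta\varphi^\varepsilon_s\,\d\mu_s\,\d s.
\]
I would bound each term on the right from below: the Fisher contribution is $\geq 0$ and is dropped; semiconcavity $\Delta\varphi^\varepsilon\leq d\lambda$ gives $-\tfrac{1}{2}\iint\Delta\varphi^\varepsilon\,\d\mu\,\d s\geq -\tfrac{(T-t-\tau)d\lambda}{2}$; and part \ref{item:RegEnt} yields $-H(\mu_{t+\tau})\geq \tfrac{d}{2}\log(2\pi\varepsilon\tau)-\tfrac{\tau L^2}{2\varepsilon}$. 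These estimates account for every term in the target bound except for a lower bound on $H(\mu_T)$.

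The main obstacle is therefore the lower bound $H(\mu_T)\geq -\tfrac{d}{2}\log(2\pi\varepsilon)-2d\,e^{4L^2T^2}$, which expresses that the $\sqrt{\varepsilon}$-noise keeps $\mu_T$ as concentrated as a Gaussian of width $\sqrt{\varepsilon}$ in spite of the bounded drift. The plan here is to factorize $\mu_T(y)=\nu^{\varepsilon,x}_T(y)\cdot \E_Q[\rho\mid \tilde Y_T=y]$ by Girsanov against the driftless path $\tilde Y_s=x+\sqrt{\varepsilon}(B_s-B_t)$, apply Jensen's inequality inside the conditional expectation, and then use It\^o's formula on $\psi^{\varepsilon,\delta}_s(\tilde Y_s)$ to replace the stochastic integral inside $\log\rho$ by boundary terms and $\int_t^T(\partial_s\psi^{\varepsilon,\delta}+\tfrac{\varepsilon}{2}\Delta\psi^{\varepsilon,\delta})\,\d s$. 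The HJB equation for $\varphi^\varepsilon$ together with the sub-solution inequality \eqref{eq:convolSub} then allow to absorb the $|\nabla\psi^{\varepsilon,\delta}|^2$ terms against $\partial_s\psi^{\varepsilon,\delta}$; this is precisely the place where the purely quadratic structure of the Hamiltonian is exploited. The Gronwall-type factor $e^{4L^2T^2}$ is expected to emerge from Gaussian exponential-moment estimates on the Brownian bridge of the form $\E[e^{c L^2|B_T-B_t|^2}]$. Controlling the possibly negative contribution coming from $\Delta\varphi^\varepsilon$ in this step is the most delicate point, since no uniform lower bound on $\Delta\varphi^\varepsilon$ is available beyond the semiconcavity upper bound.
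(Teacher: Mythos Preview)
Your treatment of part \ref{item:RegEnt} and the backbone of part \ref{item:LapLow} --- the entropy production identity
\[
\tfrac{\d}{\d s}H(\mu_s)=\int\Delta\psi^{\varepsilon,\delta}_s\,\d\mu_s-\tfrac{\varepsilon}{2}I(\mu_s),
\]
the splitting $\psi^{\varepsilon,\delta}=\tfrac12(\varphi^{0,\delta}+\varphi^\varepsilon)$, dropping the nonnegative Fisher term, using semiconcavity $\Delta\varphi^\varepsilon\le d\lambda$, and invoking \ref{item:RegEnt} for $-H(\mu_{t+\tau})$ --- is exactly the paper's argument.

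Where you diverge from the paper is the lower bound on $H(\mu_T)$, and there you are making the problem much harder than it is. You propose a Girsanov factorization of $\mu_T$ against the driftless law, Jensen inside a conditional expectation, It\^o's formula applied to $\psi^{\varepsilon,\delta}_s(\tilde Y_s)$, and then an appeal to the HJB structure to cancel quadratic terms --- while acknowledging that the sign of $\Delta\varphi^\varepsilon$ remains uncontrolled along that route. None of this machinery is needed. The paper repeats precisely the trick you already used in \ref{item:RegEnt}: for the fixed Gaussian $\gamma^\varepsilon=\mathcal N(x,\varepsilon\,\mathrm{Id})$ one has
\[
H(\mu_T)=H(\mu_T\mid\gamma^\varepsilon)+\int_{\R^d}\log\gamma^\varepsilon\,\d\mu_T
\;\ge\;-\tfrac{d}{2}\log(2\pi\varepsilon)-\tfrac{1}{2\varepsilon}\,\E\bigl[|Y^{\varepsilon,\delta,x}_T-x|^2\bigr],
\]
using only $H(\cdot\mid\cdot)\ge0$. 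The remaining second moment is handled by the elementary differential inequality obtained from It\^o's formula,
\[
\tfrac{\d}{\d s}\,\E\bigl[|Y_s-x|^2\bigr]\le 2L\bigl(\E\bigl[|Y_s-x|^2\bigr]\bigr)^{1/2}+2d\varepsilon,
\]
followed by a Gronwall-type argument. Note the choice of variance $\varepsilon$ (not $\varepsilon(T-t)$) for the reference Gaussian: this is exactly what produces the target term $-\tfrac{d}{2}\log(2\pi\varepsilon)$. Your conditional-Jensen/It\^o-on-$\psi$ plan never needs to be carried out, and the obstacle you identify (no lower bound on $\Delta\varphi^\varepsilon$) simply does not arise in the paper's argument.
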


The bound \ref{item:RegEnt} can be obtained through several ways. A natural approach is to leverage known Gaussian bounds for Green functions \cite{aronson1968non,sheu1991some}.
The optimal bounds for heat kernels \cite{davies1987explicit} would also work here, exploiting the gradient structure of \eqref{eq:OInterpSDE} using \cite[Proposition 3.1]{bakry1997optimal}.
The following short proof has been suggested to the first author by Daniel Lacker.

\begin{proof}
To alleviate notations, we omit the superscript $x$ in this proof.
\medskip

\ref{item:RegEnt} Let $w^{\varepsilon}_{[t,t+\varepsilon]} \in \ps ( \mathcal{C} ( [t,t+\tau] , \R^d ))$ denote the path law of $( x + \sqrt{\varepsilon} B_{s-t} )_{t \leq s \leq t + \tau}$.
The marginal law $w^{\varepsilon}_s$ at time $s$ is Gaussian centered at $x$ with variance $\varepsilon (s-t)$.
Let $\mu^{\varepsilon,\delta}_{[t,t+\varepsilon]}$ denote the path law of $(Y^{\varepsilon,\delta}_s)_{t \leq s \leq t +\tau}$.
The Girsanov transform computes the pathwise relative entropy
\[ H ( \mu^{\varepsilon,\delta}_{[t,t+\tau]} \vert w^{\varepsilon}_{[t,t+\tau]} ) = \frac{1}{2\varepsilon} \E \int_{t}^{t+\tau} \vert \nabla \psi^{\varepsilon,\delta}_s ( Y^{\varepsilon,\delta}_s) \vert^2 \d s \leq \frac{\tau}{ 2 \varepsilon} L^2. \]
From the contraction property of relative entropy \cite[Theorem D.13]{dembo2009large}, we get $H ( \mu^{\varepsilon,\delta}_{t+\tau} \vert w^{\varepsilon}_{t+\tau} ) \leq H ( \mu^{\varepsilon,\delta}_{[t,t+\tau]} \vert w^{\varepsilon}_{[t,t+\tau]} )$, so that
\[ \int_{\R^d} \log \mu^{\varepsilon,\delta}_{t+\tau} \d \mu^{\varepsilon,\delta}_{t+\tau} \leq \int_{\R^d} \log w^{\varepsilon}_{t+\tau} \d \mu^{\varepsilon,\delta}_{t+\tau} + \frac{\tau}{2 \varepsilon} L^2 \leq - \frac{d}{2} \log ( 2 \pi \varepsilon \tau )+ \frac{\tau}{2 \varepsilon} L^2, \]
using the pointwise bound for the Gaussian density on $\R^d$.
\medskip

\ref{item:LapLow} Let us first assume that $\mu^{\varepsilon,\delta}$ is in $\mathcal{C}^{1,2} ( (0,T) \times \R^d)$. Since $\mu^{\varepsilon,\delta}$ is positive and satisfies the Fokker-Planck equation
\[ \partial_s \mu^{\varepsilon,\delta}_s = \nabla \cdot \big[ \mu^{\varepsilon,\delta}_s \nabla \psi^{\varepsilon,\delta}_s + \frac{\varepsilon}{2} \nabla \mu^{\varepsilon,\delta}_s \big], \]
we get that
\begin{equation} \label{eq:Log}
\big[ \partial_s - \nabla \psi^{\varepsilon,\delta}_s \cdot \nabla + \frac{\varepsilon}{2} \Delta \big] \log \mu^{\varepsilon,\delta}_s = \Delta \psi^{\varepsilon,\delta}_s - \frac{\varepsilon}{2} \vert \nabla \log \mu^{\varepsilon,\delta}_s \vert^2.  
\end{equation} 
Ito's formula for \eqref{eq:OInterpSDE} then almost surely yields
\begin{multline} \label{ItoLog}
\log \mu^{\varepsilon,\delta}_{T} ( Y^{\varepsilon,\delta}_{T} ) - \log \mu^{\varepsilon,\delta}_{t+\tau} ( Y^{\varepsilon,\delta}_{t+\tau} ) = \int_{t+\tau}^T \Delta \psi_s^{\varepsilon,\delta} ( Y^{\varepsilon,\delta}_s ) - \frac{\varepsilon}{2} \vert \nabla \log \mu^{\varepsilon,\delta}_{s} ( Y^{\varepsilon,\delta}_{s} ) \vert^2 \, \d s \\
+ \sqrt{\varepsilon} \int_{t+\tau}^T \nabla \log \mu^{\varepsilon,\delta}_s (Y^{\varepsilon,\delta}_s) \cdot \d B_s. 
\end{multline} 
The last term is a martingale, using the finiteness of the integrated Fisher information given by e.g. \cite[Theorem 7.4.1]{bogachev2022fokker}.
We now take expectations and we use $\psi^{\varepsilon,\delta} = [ \varphi^{\varepsilon} + \varphi^{0,\delta} ]/2$ to write
\[ \frac{1}{2} \int_{t+\tau}^T \int_{\R^d} \Delta \varphi^{0,\delta}_s \d \mu^{\varepsilon,\delta}_s \d s \geq - \frac{1}{2} \int_{t+\tau}^T \int_{\R^d} \Delta \varphi^{\varepsilon}_s \d \mu^{\varepsilon,\delta}_s \d s + \int_{\R^d} \log \mu^{\varepsilon,\delta}_{T} \d \mu^{\varepsilon,\delta}_{T} - \int_{\R^d} \log \mu^{\varepsilon,\delta}_{t+\tau} \d \mu^{\varepsilon,\delta}_{t+\tau}, \]
where we got rid of the non-positive integrated Fisher information term.
We lower bound the first term on the r.h.s. using the semiconcavity of $\varphi^\varepsilon_s$, and the last term using \ref{item:RegEnt}. For the middle term, denoting the Gaussian law $\mathcal{N}(x,{\varepsilon \mathrm{Id}})$ by $\gamma^\varepsilon$,
\[ \int_{\R^d} \log \mu^{\varepsilon,\delta}_{T} \d \mu^{\varepsilon,\delta}_{T} = H ( \mu^{\varepsilon,\delta}_{T} \vert \gamma^\varepsilon ) + \int_{\R^d} \log \gamma^\varepsilon \d \mu^{\varepsilon,\delta}_{T} \geq - \frac{d}{2}\log( 2 \pi \varepsilon ) - \frac{1}{2 \varepsilon} \int_{\R^d} ( y - x)^2 \d \mu^{\varepsilon,\delta}_{T} ( y ). \]
Furthermore, using Ito's formula and Jensen's inequality 
\[ \frac{\d}{\d s} \int_{\R^d} ( y - x)^2 \d \mu^{\varepsilon,\delta}_{s} ( y ) \leq 2 L \bigg( \int_{\R^d} ( y - x)^2 \d \mu^{\varepsilon,\delta}_{s} \bigg)^{1/2} + 2 d \varepsilon, \]
so that $\int_{\R^d} ( y - x)^2 \d \mu^{\varepsilon,\delta}_{T} ( y ) \leq 4 d \varepsilon e^{4 L^2 T^2}$, using a variation on the Gronwall lemma.
Gathering terms yields the r.h.s of \ref{item:LapLow}.

In general $\mu^{\varepsilon,\delta}$ is not $\mathcal{C}^{1,2}$, but we can regularise $\psi^{\varepsilon,\delta}$ and use a standard approximation argument.
Indeed, the Green function $\mu^{\varepsilon,\delta}$ is in $L^2 ( [t+\tau,T], H^1 ( \R^d))$ from \cite[Theorem 9]{aronson1968non}, and can be approximated weakly in $L^2 ( [t+\tau,T], H^1 ( \R^d))$ in this way by smooth Green functions that pointwise converge to it \cite[Lemma 7]{aronson1968non}.
\end{proof}

Let us first estimate the error near the terminal time.

\begin{lemma}[Terminal continuity] \label{lem:TemrCont}
If $f$ and $g$ are only Lipschitz-continuous, then for every $\varepsilon \in (0,1]$ and $\eta \in [0,T]$,
\[ \sup_{(t,x) \in [T-\eta,T] \times \R^d} | \varphi_t^{\varepsilon}(x) - \varphi_t^0(x)| \leq 3 L^2 \eta +  2 L \sqrt{\varepsilon} \sqrt{\eta}. \]
\end{lemma}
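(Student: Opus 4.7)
The plan is to bootstrap off the stochastic control representations \eqref{eq:Vcontroleps} and \eqref{eq:Vcontrol0} by sandwiching both $\varphi^\varepsilon_t(x)$ and $\varphi^0_t(x)$ around the ``do--nothing'' value $(T-t)f(x)+g(x)$. Since $\varphi^\varepsilon_T = \varphi^0_T = g$, near the terminal time this linear-in-$(T-t)$ approximation should be accurate up to an error comparable with $L^2\eta + L\sqrt{\varepsilon\eta}$, which is exactly the structure of the claimed bound.

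\textbf{Upper bounds.} I plug the zero control $\alpha \equiv 0$ into both variational formulas. In the deterministic problem \eqref{eq:Vcontrol0} the trajectory is constant, giving immediately $\varphi^0_t(x) \leq (T-t)f(x) + g(x)$. In the stochastic problem \eqref{eq:Vcontroleps}, one has $X^{\varepsilon,0}_s = x + \sqrt{\varepsilon}(B_s - B_t)$, and Lipschitz continuity of $f,g$ together with a standard bound on $\mathbb{E}|B_s - B_t|$ yields $\varphi^\varepsilon_t(x) \leq (T-t)f(x) + g(x) + C L\sqrt{\varepsilon(T-t)}$.

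\textbf{Lower bounds.} For any admissible control $\alpha$ (deterministic in the $\varepsilon = 0$ case, progressively measurable for $\varepsilon>0$), I use Lipschitz continuity of $f, g$ and the pointwise bound $|X^{\varepsilon,\alpha}_s - x| \leq \int_t^s |\alpha_r|\,dr + \sqrt{\varepsilon}|B_s - B_t|$ to write
\[
\mathbb{E}\!\left[\int_t^T \tfrac{1}{2}|\alpha_s|^2 + f(X^{\varepsilon,\alpha}_s)\,ds + g(X^{\varepsilon,\alpha}_T)\right]
\geq (T-t)f(x) + g(x) + \mathbb{E}\!\int_t^T\!\left[\tfrac{1}{2}|\alpha_s|^2 - \bigl(L_f(T-s) + L_g\bigr)|\alpha_s|\right]ds - C L\sqrt{\varepsilon(T-t)},
\]
after swapping the order of integration in the $f$ term. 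Since $L_f(T-s) + L_g \leq L$, the pointwise minimisation $\tfrac{1}{2}u^2 - Lu \geq -\tfrac{L^2}{2}$ gives a time-integrated lower bound of $-\tfrac{L^2(T-t)}{2}$ that is uniform in $\alpha$. Taking the infimum yields
\[
\varphi^\varepsilon_t(x),\ \varphi^0_t(x) \geq (T-t)f(x) + g(x) - \tfrac{L^2(T-t)}{2} - C L\sqrt{\varepsilon(T-t)}.
\]

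\textbf{Conclusion.} Combining the upper and lower bounds through the triangle inequality the $(T-t)f(x) + g(x)$ term cancels, and the remaining constants can be absorbed into $3L^2\eta + 2L\sqrt{\varepsilon\eta}$ using $T-t \leq \eta$. The only delicate step is the quadratic-versus-linear minimisation on the running cost which produces the leading $L^2\eta$ contribution; once that is handled, the Brownian displacement term is routine. I do not expect any serious obstacle here, the proof being essentially a careful bookkeeping of the representation formulas over a short time interval.
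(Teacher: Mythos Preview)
Your argument is correct and in fact gives slightly sharper constants than stated: carrying your bounds through yields $|\varphi^\varepsilon_t(x)-\varphi^0_t(x)|\leq \tfrac{L^2}{2}(T-t)+L\sqrt{d}\sqrt{\varepsilon(T-t)}$ (the $\sqrt{d}$ coming from $\E|B_s-B_t|$, which the paper also glosses over), comfortably inside $3L^2\eta+2L\sqrt{\varepsilon\eta}$.

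The route, however, is different from the paper's. The paper fixes two viscosity levels $\varepsilon,\tilde\varepsilon>0$, writes each value function along its \emph{optimal} trajectory \eqref{eq:opteps}, uses the gradient bound \eqref{eq:GradBound} to control the running cost by $L^2(T-t)$, bounds $\E|\overline{X}^\varepsilon_s-\overline{X}^{\tilde\varepsilon}_s|$ crudely, and then sends $\tilde\varepsilon\to 0$. You instead sandwich both $\varphi^\varepsilon_t(x)$ and $\varphi^0_t(x)$ around the common reference $(T-t)f(x)+g(x)$: the zero control gives the upper bound, while for the lower bound you absorb the displacement cost into the running cost via the pointwise minimisation $\tfrac12 u^2-Lu\geq -\tfrac{L^2}{2}$. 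Your approach is more elementary in that it never invokes the optimal feedback $-\nabla\varphi^\varepsilon$ or the a~priori gradient estimate, and it avoids the limiting step $\tilde\varepsilon\to 0$; the paper's approach, on the other hand, is a direct coupling of the two value functions which makes the origin of each error term more transparent. Both are short and essentially equivalent in strength.
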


\begin{proof}
We leverage the control tools from Section \ref{subsec:controlsto}. From
\eqref{eq:Vcontroleps}-\eqref{eq:opteps}, for every $(t,x) \in [0,T] \times \R^d$,
\[ \varphi_t^{\varepsilon}(x) = \E \bigg[ \frac{1}{2}\int_{t}^T | \nabla \varphi_s^{\varepsilon}( \overline{X}^{\varepsilon}_s)|^2 + f ( \overline{X}^\varepsilon_s ) \d s +  g( \overline{X}^\varepsilon_T) \bigg], \]
where $( \overline{X}_s^{\varepsilon})_{s \in [t,T]}$ is the optimal process \eqref{eq:opteps} starting at $x$.
For $\tilde\varepsilon \in (0,1]$, recalling the gradient bound \eqref{eq:GradBound}, this implies
\[ \vert \varphi_t^{\varepsilon}(x) - \varphi_t^{\tilde\varepsilon}(x) \vert \leq L^2 (T-t) + \frac{L_f}{2} (T-t) \sup_{t \leq s \leq T} \E [ \vert \overline{X}_s^{\varepsilon} - \overline{X}_s^{\tilde\varepsilon} \vert ] + L_g \E [ \vert \overline{X}_T^{\varepsilon} - \overline{X}_T^{\tilde\varepsilon} \vert ]. \]
However, for every $s \in [t,T]$, 
\[ \E \bigl[ | \overline{X}_s^{\varepsilon} - \overline{X}_s^{\tilde\varepsilon} |  \bigr] = \E \bigg[ \bigg\lvert \int_{t}^s -\nabla \varphi_r^{\varepsilon}(\overline{X}^\varepsilon_r) + \nabla \varphi_r^{\varepsilon}(\overline{X}^{\tilde\varepsilon}_r) \d r + ( \sqrt{\varepsilon} - \sqrt{\tilde\varepsilon} )(B_{s} - B_{t}) \bigg\rvert \bigg], \]
so that $\E \bigl[ | \overline{X}_s^{\varepsilon} - \overline{X}_s^{\tilde\varepsilon} |  \bigr] \leq 2 L (T-t) + ( \sqrt{\varepsilon} + \sqrt{\tilde\varepsilon}) \sqrt{T-t}$. Gathering terms yields
\begin{multline*} 
\vert \varphi_t^{\varepsilon}(x) - \varphi_t^{\tilde\varepsilon}(x) \vert \leq L^2 (T-t) + \frac{L_f}{2} (T-t) [ 2 L ( T -t ) + ( \sqrt{\varepsilon} + \sqrt{\tilde\varepsilon}) \sqrt{T-t} ] \\
+ L_g[ 2 L (T-t) + ( \sqrt{\varepsilon} + \sqrt{\tilde\varepsilon}) \sqrt{T-t} ]. \end{multline*}
Sending $\tilde\varepsilon \rightarrow 0$ concludes, recalling that $L = L_g + T L_f$.
\end{proof}

We are now ready to compute the optimal rate.

\begin{theorem}[Optimal rate] \label{thm:optSC}
If $f$, $g$ are globally Lipschitz and semiconcave, then there exists $C_{\mathrm{opt}} >0$ such that
\begin{equation}
\forall \varepsilon \in (0,1], \quad \sup_{(t,x) \in [0,T] \in \R^d} \vert \varphi^\varepsilon_t (x) - \varphi^0_t (x) \vert \leq - C_{\mathrm{opt}} \, \varepsilon \log \varepsilon.
\end{equation}  
Furthermore, there exists an explicit constant $C(d,T,L_f,L_g,\lambda_f ,\lambda_g) > 0 $ such that
\[ \forall (t,x) \in [0,T] \times \R^d, \quad \varphi^\varepsilon_t (x) - \varphi^0_t (x) \geq \tfrac{d}{2} \varepsilon \log \varepsilon -C(d,T,L_f,L_g,\lambda_f, \lambda_g) \varepsilon, \]
where $\lambda_f$, $\lambda_g$ are the semiconcavity constants of $f$, $g$.
\end{theorem}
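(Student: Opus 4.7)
The plan is to combine the upper bound from Lemma \ref{lem:UppOpt}, which already yields $\varphi^\varepsilon_t - \varphi^0_t \leq (T-t)d\lambda\varepsilon/2$, with a matching lower bound. The core task is therefore the lower bound, for which I would apply Itô's formula to $\varphi^\varepsilon - \varphi^{0,\delta}$ along the flow \eqref{eq:OInterpSDE} and then invoke Proposition \ref{pro:Regularising}\ref{item:LapLow}.

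Fix $(t,x)$ and assume first $T - t \geq \varepsilon$. Set $h^{\varepsilon,\delta}_s := \varphi^\varepsilon_s - \varphi^{0,\delta}_s$ and apply Itô's formula to $h^{\varepsilon,\delta}_s(Y^{\varepsilon,\delta,x}_s)$. Inserting the HJB equation \eqref{eq:HJBeps} for $\partial_s \varphi^\varepsilon$ and the sub-solution inequality \eqref{eq:convolSub} for $\partial_s \varphi^{0,\delta}$, the algebraic identity
\begin{equation*}
\tfrac{1}{2}\bigl(|\nabla \varphi^\varepsilon|^2 - |\nabla \varphi^{0,\delta}|^2\bigr) = \nabla h^{\varepsilon,\delta} \cdot \nabla \psi^{\varepsilon,\delta}
\end{equation*}
cancels the drift produced by the quadratic Hamiltonian against the transport term of the flow. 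After taking expectations, the only surviving contribution is the Laplacian of $\varphi^{0,\delta}$, giving
\begin{equation*}
\varphi^\varepsilon_t(x) - \varphi^{0,\delta}_t(x) \geq -2L\delta + \tfrac{\varepsilon}{2}\int_t^T \!\!\int_{\R^d} \Delta \varphi^{0,\delta}_s \,\d\mu^{\varepsilon,\delta,x}_s \,\d s - 2\delta L L_f(T-t),
\end{equation*}
where $-2L\delta$ bounds the terminal error $\E[(g - g^\delta)(Y_T)]$ via Lemma \ref{lem:supconv}\ref{item:supAppr}. Note that passing from $\varphi^{0,\delta}_t$ back to $\varphi^0_t$ costs nothing on the lower side since $\varphi^{0,\delta} \geq \varphi^0$.

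To handle the Laplacian integral I would split $[t,T]$ at $t+\tau$. On $[t, t+\tau]$ use the semiconvexity bound $\Delta \varphi^{0,\delta} \geq -d/\delta$ from Lemma \ref{lem:supconv}\ref{item:supReg} to get $-d\varepsilon\tau/(2\delta)$. On $[t+\tau, T]$ invoke Proposition \ref{pro:Regularising}\ref{item:LapLow}: after multiplication by $\varepsilon$ this contributes $\tfrac{d\varepsilon}{2}\log\tau$ plus $O(\tau L^2 + \varepsilon)$ error terms (the two $\tfrac{d\varepsilon}{2}\log(2\pi\varepsilon)$ contributions miraculously cancel). Setting $\delta = \tau = \varepsilon$ then balances every error at order $\varepsilon$ and produces the advertised main term $\tfrac{d\varepsilon}{2}\log\varepsilon$, giving the desired $\varphi^\varepsilon_t - \varphi^0_t \geq \tfrac{d\varepsilon}{2}\log\varepsilon - C\varepsilon$. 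The residual regime $T - t < \varepsilon$ is controlled directly by Lemma \ref{lem:TemrCont} with $\eta = T - t$, yielding $|\varphi^\varepsilon - \varphi^0| \leq (3L^2 + 2L)\varepsilon$, which is well within the target.

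The principal obstacle is the triple balance $\delta = \tau = \varepsilon$: the sup-convolution approximation error scales like $\delta$, the short-time semiconvexity loss like $\varepsilon\tau/\delta$, and the Girsanov-type cost from Proposition \ref{pro:Regularising}\ref{item:RegEnt} like $\tau/\varepsilon$, so only the common scale $\delta = \tau = \varepsilon$ matches all three simultaneously. It is precisely at this critical scale that the dimensional factor $d/2$ appears, via the Gaussian normalisation $\tfrac{d}{2}\log(2\pi\varepsilon\tau)$ in the entropy estimate. A secondary technical point is that $\varphi^{0,\delta}$ is only $\mathcal{C}^{1,1}$ in space and locally Lipschitz in time, so Itô's formula must be justified by mollifying $\varphi^{0,\delta}$ and exploiting the a.e. form of the sub-solution inequality in Lemma \ref{lem:supconv}, much as in the approximation argument already used inside the proof of Proposition \ref{pro:Regularising}.
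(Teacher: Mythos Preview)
Your proposal is correct and follows essentially the same route as the paper: derive \eqref{eq:KeyLower} via the algebraic identity $\tfrac12(|\nabla\varphi^\varepsilon|^2-|\nabla\varphi^{0,\delta}|^2)=\nabla h^{\varepsilon,\delta}\cdot\nabla\psi^{\varepsilon,\delta}$, apply It\^o's formula along \eqref{eq:OInterpSDE} to obtain \eqref{eq:StepBefore}, split the Laplacian integral at $t+\tau$ using semiconvexity on the short interval and Proposition~\ref{pro:Regularising}\ref{item:LapLow} on the remainder, then set $\delta=\tau=\varepsilon$ and cover the residual regime $T-t<\varepsilon$ with Lemma~\ref{lem:TemrCont}. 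The paper invokes Krylov's It\^o formula for functions with generalised derivatives to justify the computation on $\varphi^{0,\delta}$, which is the precise version of the mollification argument you sketch at the end.
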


\begin{proof}
Subtracting \eqref{eq:HJBeps} to \eqref{eq:convolSub}, a direct computation yields
\begin{equation} \label{eq:KeyLower}
\big[ \partial_s - \nabla \psi^{\varepsilon,\delta}_s \cdot \nabla + \tfrac{\varepsilon}{2} \Delta \big] ( \varphi^{\varepsilon}_s - \varphi^{0,\delta}_s ) \leq - \frac{\varepsilon}{2} \Delta \varphi^{0,\delta}_s + 2 \delta L L_f. 
\end{equation} 
From the regularity given by Lemma \ref{lem:supconv}-\ref{item:supReg}, we can apply Ito's formula along \eqref{eq:OInterpSDE} for functions with generalized derivatives \cite[Chapter 2.10,Theorem 2]{krylov2008controlled} to $\varphi^{\varepsilon} - \varphi^{0,\delta}$, and take expectations. 
Since $\varphi^\varepsilon_T = \varphi^{0}_T = g$, this yields
\begin{equation} \label{eq:StepBefore}
\forall (t,x) \in [0,T] \times \R^d, \quad \varphi^\varepsilon_t (x) - \varphi^{0,\delta}_t (x) \geq - \lVert g - g^\delta \rVert_\infty - 2 \delta L_f L + \frac{\varepsilon}{2} \int_t^T \E \big[ \Delta \varphi^{0,\delta}_s (  Y^{\varepsilon,\delta,x}_s ) \big] \d s. 
\end{equation} 
From Lemma \ref{lem:supconv}-\ref{item:supReg}, $\varphi^{0,\delta}$ is $-1/\delta$-semiconvex, so that for $\tau \in (0,T-t]$,
\begin{equation} \label{eq:LastConvS}
\frac{\varepsilon}{2} \int_t^{t+\tau} \E \big[ \Delta \varphi^{0,\delta}_s (  Y^{\varepsilon,\delta,x}_s ) \big] \d s \geq - \frac{\varepsilon \tau d}{2\delta}. 
\end{equation} 
Since $\mu^{\varepsilon,\delta,x}_s$ is the law of $Y^{\varepsilon,\delta,x}_s$, Proposition \ref{pro:Regularising}-\ref{item:LapLow} and Lemma \ref{lem:supconv}-\ref{item:supAppr} then yield
\[ \varphi^\varepsilon_t (x) - \varphi^{0}_t (x) \geq - 2 L ( 1 + L_f) \delta - \frac{\varepsilon \tau d}{2\delta} + \frac{\varepsilon d}{2} \log ( 2 \pi \varepsilon \tau ) - \frac{\tau L^2}{2} - \frac{\varepsilon d}{2} \log (2 \pi \varepsilon ) - \frac{\varepsilon(T-t-\tau) d \lambda}{2} - 2 \varepsilon d e^{4 L^2 T^2}, \]
for every $(t,x) \in [0,T] \times \R^d$. 
If $T - t \geq \varepsilon$, we can choose $\delta = \tau = \varepsilon$ and conclude.
Otherwise, Lemma \ref{lem:TemrCont} applied to $\eta = \varepsilon$ completes the proof.
\end{proof}

We notice that the stochastic aspects of this proof could be avoided, since Ito's formula is only needed in an averaged sense against $\mu^{\varepsilon,\delta}_s$. 
Seeing $\mu^{\varepsilon,\delta}_s$ as the flow of the adjoint of a well-chosen ``linearised version'' of \eqref{eq:HJBeps} is reminiscent of the nonlinear adjoint method \cite{tran2011adjoint}.

\begin{rem}[Recovering Theorem \ref{thm:subopt}]
Since $\varphi^{0,\delta}_s$ is $-1/\delta$-semiconvex, we could directly lower bound $\Delta \varphi^{0,\delta}_s$ by $-d / \delta$ in \eqref{eq:StepBefore} and set $\delta = \sqrt{\varepsilon}$. 
This would recover the sub-optimal rate from Theorem \ref{thm:subopt}.
\end{rem}

\begin{rem}[Integrated Fisher information]
It is tempting to exploit the sign of the integrated Fisher information that appears in \eqref{ItoLog} after taking expectations.
For instance, we could integrate the Laplacian by parts in the last term of \eqref{eq:StepBefore}.
However, such an approach would yield the sub-optimal rate $\sqrt{\varepsilon}$, because of the $\varepsilon$ factor in front of the $\vert \nabla \log \mu^{\varepsilon,\delta}_s \vert^2$ term in \eqref{ItoLog}.   
\end{rem}

\subsection{Beyond semiconcavity} \label{subsec:NSC}

In this section, we no more assume that $g$ is semiconcave.
Instead, we rely on the ``concavifying'' effect of Hamilton-Jacobi equations \cite{cannarsa2004semiconcave}.
For the sake of simplicity, we take $f \equiv 0$.

\begin{lemma}[Semiconcavity generation] \label{lem:SC+}
For $(\varepsilon,t) \in (0,1] \times [0,T)$, $\varphi^\varepsilon_t$ is $\tfrac{1}{T-t}$-semiconcave.
\end{lemma}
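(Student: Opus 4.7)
The plan is to exploit the stochastic control representation \eqref{eq:Vcontroleps} (with $f \equiv 0$) and mimic the classical Hopf-Lax argument via an affine perturbation of the optimal control. Fix $(t,x) \in [0,T) \times \R^d$ and $h \in \R^d$. From \eqref{eq:opteps}, the infimum defining $\varphi^\varepsilon_t(x)$ is attained along the optimal trajectory $(\overline{X}^\varepsilon_s)_{s \in [t,T]}$ starting at $x$, driven by the feedback $\overline{\alpha}_s := - \nabla \varphi^\varepsilon_s(\overline{X}^\varepsilon_s)$, which is uniformly bounded by $L = L_g$ thanks to \eqref{eq:GradBound}.

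For the two shifted starting points $x \pm h$, I would use the perturbed controls $\alpha^{\pm}_s := \overline{\alpha}_s \mp h/(T-t)$. A direct computation shows that the associated trajectories are
\[ X^{\pm}_s = \overline{X}^\varepsilon_s \pm \Big( 1 - \frac{s-t}{T-t} \Big) h, \qquad t \leq s \leq T, \]
so that both end exactly at $\overline{X}^\varepsilon_T$ at time $s = T$. Plugging $\alpha^{\pm}$ as admissible candidate controls into \eqref{eq:Vcontroleps} at the starting points $x \pm h$, and using that the three terminal costs all equal $g(\overline{X}^\varepsilon_T)$ while $f \equiv 0$ kills any contribution of the running shift, yields
\[ \varphi^\varepsilon_t(x+h) + \varphi^\varepsilon_t(x-h) \leq \E \bigg[ \frac{1}{2} \int_t^T | \alpha^+_s |^2 + | \alpha^-_s |^2 \d s + 2 g(\overline{X}^\varepsilon_T) \bigg]. \]
Expanding $|\overline{\alpha}_s \mp h/(T-t)|^2$, the cross terms $\mp 2 \overline{\alpha}_s \cdot h/(T-t)$ cancel upon summation, leaving
\[ \varphi^\varepsilon_t(x+h) + \varphi^\varepsilon_t(x-h) \leq 2\varphi^\varepsilon_t(x) + \int_t^T \frac{|h|^2}{(T-t)^2} \d s = 2\varphi^\varepsilon_t(x) + \frac{|h|^2}{T-t}, \]
which is exactly the announced $\tfrac{1}{T-t}$-semiconcavity.

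The structural reason this works is that under $f \equiv 0$ the running cost depends only on the control, so perturbing the optimal control by the constant $\mp h/(T-t)$ costs precisely $\frac{|h|^2}{T-t}$ in total $L^2$ norm without affecting the terminal state, thereby reproducing at the stochastic level the $1/(T-t)$-semiconcavity produced by the inf-convolution in the Hopf-Lax formula. I do not foresee a genuine obstacle: the existence and square-integrability of the optimal feedback are guaranteed by \eqref{eq:GradBound}, no semiconcavity of $g$ is invoked, and the Brownian increment $\sqrt{\varepsilon} B_s$ is common to the three dynamics so it plays no role beyond keeping the trajectories coupled. If one wanted to avoid relying on attainment of the infimum, the identical argument applied to a $\eta$-optimal control followed by $\eta \to 0$ would also do the job.
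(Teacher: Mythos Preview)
Your proof is correct and follows a genuinely different route from the paper's. The paper applies the Cole--Hopf transform to obtain the explicit representation \eqref{eq:HopfC} and then computes $\nabla^2 \varphi^\varepsilon_t(x)$ directly, observing that it equals $\tfrac{1}{T-t}\mathrm{Id}$ minus a positive semidefinite matrix coming from a (co)variance term. Your argument instead works purely at the level of the control representation \eqref{eq:Vcontroleps}: you build competitor controls at $x\pm h$ by adding the affine correction $\mp h/(T-t)$ to the optimal feedback from $x$, forcing all three trajectories to coincide at time $T$ so that the terminal costs cancel, while the purely quadratic running cost produces exactly the excess $\tfrac{|h|^2}{T-t}$ after the cross terms cancel upon summation. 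This is the classical Hopf--Lax semiconcavity argument lifted verbatim to the stochastic setting, the common Brownian increment playing no role once the three dynamics are coupled. The paper's approach is shorter here because the Cole--Hopf formula is already at hand and it yields an explicit expression for the Hessian; your approach is more robust in that it does not rely on any explicit solution formula and would extend (with a modified constant) to a running cost $\tfrac12|\alpha|^2 + f(x)$ with $f$ semiconcave, or to more general Lagrangians that are uniformly convex in the control.
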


\begin{proof}
A direct computation (Cole-Hopf transform) shows that $-\varepsilon \log \varphi^\varepsilon_t$ solves a time-reversed heat equation. Hence,
\begin{equation} \label{eq:HopfC}
\forall (t,x) \in \R^d, \qquad \varphi^\varepsilon_t (x) = \frac{\varepsilon d}{2} \log ( 2 \pi \varepsilon(T-t) ) - \varepsilon \log \int_{\R^d} \exp \bigg[ - \frac{g(y)}{\varepsilon} - \frac{|y-x|^2}{2 \varepsilon (T-t)} \bigg] \d y, 
\end{equation} 
so that
\[ \nabla^2 \varphi^\varepsilon_t (x) = \frac{1}{T-t} \mathrm{Id} - \frac{\int_{\R^d} (y-x) \otimes (y-x) \exp \bigg[ - \frac{g(y)}{\varepsilon} - \frac{|y-x|^2}{2 \varepsilon (T-t)} \bigg] \d y}{\varepsilon (T-t)^2 \int_{\R^d} \exp \bigg[ - \frac{g(y)}{\varepsilon} - \frac{|y-x|^2}{2 \varepsilon (T-t)} \bigg] \d y}. \]
The second matrix is positive semidefinite, proving the result. 
\end{proof}

This property extends to $\varepsilon = 0$.

\begin{proposition}[Upper bound] \label{pro:UppNSC}
For every $\varepsilon \in (0,1]$,
\begin{equation*} \label{eq:Upper}
\forall (t,x) \in [0,T] \times \R^d, \quad \varphi^{\varepsilon}_t ( x) - \varphi^{0}_t ( x) \leq - \frac{d  \varepsilon}{2} \varepsilon \log \varepsilon + \frac{d \varepsilon}{2} \log T  + \varepsilon (3 L^2_g + 2 L_g).
\end{equation*}
\end{proposition}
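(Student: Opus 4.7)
The plan is to convert the one-sided Hessian bound from Lemma \ref{lem:SC+}, namely $\Delta \varphi^\varepsilon_t \leq d/(T-t)$, into an effective viscosity subsolution property for the limiting equation \eqref{eq:HJB0}, and then invoke the standard comparison principle. The degeneracy of this semiconcavity bound at $t = T$ forces us to stay a small distance $\eta > 0$ from the terminal time; the resulting boundary layer of size $\eta$ will be absorbed via Lemma \ref{lem:TemrCont}.

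Fix $\eta \in (0,T]$ and set $R(t) := \tfrac{\varepsilon d}{2} \log \tfrac{\eta}{T-t}$ on $[0, T-\eta]$, so that $R(T-\eta) = 0$ and $R'(t) = \tfrac{\varepsilon d}{2(T-t)}$. First I observe that, with $f \equiv 0$, equation \eqref{eq:HJBeps} and Lemma \ref{lem:SC+} give
\[
- \partial_t \varphi^\varepsilon_t + \tfrac{1}{2} | \nabla \varphi^\varepsilon_t |^2 \;=\; \tfrac{\varepsilon}{2} \Delta \varphi^\varepsilon_t \;\leq\; \tfrac{\varepsilon d}{2(T-t)} \;=\; R'(t),
\]
so $u^\varepsilon := \varphi^\varepsilon + R$ is a classical, hence viscosity, subsolution of \eqref{eq:HJB0} on $[0, T-\eta] \times \R^d$. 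Next I compare $u^\varepsilon$ with $\varphi^0 + c_\eta$, where $c_\eta := \sup_x \bigl( \varphi^\varepsilon_{T-\eta}(x) - \varphi^0_{T-\eta}(x) \bigr)$. Since $u^\varepsilon_{T-\eta} = \varphi^\varepsilon_{T-\eta} \leq \varphi^0_{T-\eta} + c_\eta$, and $\varphi^0 + c_\eta$ remains a viscosity solution of \eqref{eq:HJB0} (constants are invisible to the equation when $f \equiv 0$), the comparison principle for Lipschitz viscosity solutions yields, for every $t \in [0, T-\eta]$,
\[
\varphi^\varepsilon_t - \varphi^0_t \;\leq\; -R(t) + c_\eta \;=\; \tfrac{\varepsilon d}{2} \log \tfrac{T-t}{\eta} + c_\eta.
\]
Lemma \ref{lem:TemrCont} (with $f \equiv 0$, hence $L = L_g$) controls $c_\eta \leq 3 L_g^2 \eta + 2 L_g \sqrt{\varepsilon \eta}$.

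The final step is to balance the two errors by choosing $\eta = \varepsilon$, which produces exactly the $-\tfrac{d\varepsilon}{2}\log\varepsilon + \tfrac{d\varepsilon}{2}\log T$ leading term together with the $(3L_g^2 + 2L_g)\varepsilon$ remainder on $[0, T-\varepsilon]$. On the boundary strip $[T-\varepsilon, T]$, Lemma \ref{lem:TemrCont} with $\eta = \varepsilon$ directly gives $|\varphi^\varepsilon_t - \varphi^0_t| \leq (3L_g^2 + 2L_g)\varepsilon$, which is absorbed by the announced right-hand side since $-\log\varepsilon \geq 0$ when $\varepsilon \leq 1$. The main obstacle is precisely the logarithmic blow-up at $t=T$ of the Hessian bound $d/(T-t)$: shrinking the cutoff $\eta$ improves the terminal discrepancy $c_\eta$ but worsens $-R(t)$, and the calibration $\eta = \varepsilon$ is exactly what matches these two competing errors and yields the sharp $d/2$ pre-factor in front of $\varepsilon\log\varepsilon$.
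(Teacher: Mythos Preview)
Your proof is correct and follows essentially the same strategy as the paper's: exploit the $\tfrac{1}{T-t}$-semiconcavity from Lemma~\ref{lem:SC+} to build a barrier, apply comparison on $[0,T-\eta]$, control the terminal layer with Lemma~\ref{lem:TemrCont}, and set $\eta=\varepsilon$. The only difference is on which side you place the correction: the paper adds $\tfrac{\varepsilon}{2}\int_t^{T-\eta}\tfrac{d}{T-s}\,\d s$ to $\varphi^0$ to obtain a viscosity super-solution of the \emph{viscous} equation (using semiconcavity of $\varphi^0$), whereas you add $R(t)$ to $\varphi^{\varepsilon}$ to obtain a classical sub-solution of the \emph{inviscid} equation (using semiconcavity of $\varphi^{\varepsilon}$). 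These are mirror images of the same argument; your variant is arguably a touch cleaner since $\varphi^{\varepsilon}$ is $\mathcal{C}^{1,2}$, so the subsolution check is pointwise and no viscosity-sense verification for $\varphi^0$ is needed.
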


\begin{proof}
We first fix $\eta \in (0,T)$ and observe that, because of the $\frac{1}{T-t}$-semi-concavity of $\varphi^0_t$, we easily show that $(t,x) \mapsto \varphi^0_t (x) + \tfrac{\varepsilon}{2} \int_t^{T-\eta} \frac{d}{T-s} \d s$ is a viscosity super-solution of 
    $$ -\partial_tv_t + \frac{1}{2} |\nabla v_t|^2 - \frac{\varepsilon}{2} \Delta v_t = 0, \quad \mbox{ in  } [0,T-\eta] \times \R^d, \quad v_{T-\eta} = \varphi^0_{T-\eta} $$
and we deduce that
\begin{align*} 
\sup_{t \in [0,T-\eta] \times \R^d} \varphi^{\varepsilon}_t(x) - \varphi^0_t(x) &\leq \frac{d\varepsilon}{2} \int_0^{T-\eta} \frac{1}{T-t} \d t + \sup_{x \in \R^d} | \varphi^{\varepsilon}_{T-\eta}(x) - \varphi^0_{T-\eta}(x)|  \\
& \leq \frac{d\varepsilon}{2} \log \frac{T}{\eta} + \sup_{(t,x) \in [T-\eta,T] \times \R^d} | \varphi_t^{\varepsilon}(x) - \varphi^0_t(x)|.  
\end{align*}
Using Lemma \ref{lem:TemrCont}, choosing $\eta = \varepsilon$ concludes.
\end{proof}

We are now ready to conclude the proof of Theorem \ref{thm:opt}.

\begin{proof}[End of the proof of Theorem \ref{thm:opt}]
If $T- t \leq \varepsilon$, the result is given by Lemma \ref{lem:TemrCont} with $\eta = \varepsilon$. 
Hence, we can assume that $0 < \varepsilon < T-t$.
We only perform a small change in the proof of  Theorem \ref{thm:optSC}: from \eqref{eq:KeyLower}, we apply Ito's formula from time $t$ untill $T- \varepsilon$. 
This replaces \eqref{eq:StepBefore} by
\begin{equation} \label{eq:StepNSC}
\varphi^\varepsilon_t (x) - \varphi^{0,\delta}_t (x) \geq \E [ \varphi^\varepsilon_{T-\varepsilon} ( Y^{\varepsilon,\delta,x}_{T-\varepsilon} ) - \varphi^{0,\delta}_{T-\varepsilon} (Y^{\varepsilon,\delta,x}_{T-\varepsilon})] + \frac{\varepsilon}{2} \int_t^{T-\varepsilon} \E \big[ \Delta \varphi^{0,\delta}_s (  Y^{\varepsilon,\delta,x}_s ) \big] \d s. 
\end{equation}
Combining Lemma \ref{lem:supconv}-\ref{item:supReg} and Lemma \ref{lem:TemrCont} applied to $\eta = \varepsilon$,
\[ \E [ \varphi^\varepsilon_{T-\varepsilon} ( Y^{\varepsilon,\delta,x}_{T-\varepsilon} ) - \varphi^{0,\delta}_{T-\varepsilon} (Y^{\varepsilon,\delta,x}_{T-\varepsilon})] \geq - \lVert \varphi^\varepsilon_{T-\varepsilon} - \varphi^0_{T-\varepsilon} \rVert_\infty - \lVert \varphi^0_{T-\varepsilon} - \varphi^{0,\delta}_{T-\varepsilon} \rVert_\infty \geq - \varepsilon (3 L^2_g + 2 L_g) - 2 L_g \delta . \]
For $\tau \in (0,T-t-\varepsilon]$, Lemma \ref{lem:SC+} then gives 
\[ -\frac{1}{2} \int_{t+\tau}^{T-\varepsilon} \int_{\R^d} \Delta \varphi^{\varepsilon}_s \d \mu^{\varepsilon,\delta}_s \d s \geq - \frac{1}{2} \int_{t+\tau}^{T-\varepsilon}  \frac{d}{2 (T-s)} \d s \geq \frac{d}{2} \log \frac{\varepsilon}{T}. \]
This replaces the result of Proposition \ref{pro:Regularising}-\ref{item:LapLow} by
\[ \frac{1}{2} \int_{t+\tau}^{T-\varepsilon} \int_{\R^d} \Delta \varphi^{0,\delta}_s \d \mu^{\varepsilon,\delta, x}_s \d s \geq \frac{d}{2} \log ( 2 \pi \varepsilon \tau ) - \frac{\tau}{2 \varepsilon} L_g^2 - \frac{d}{2} \log (2 \pi \varepsilon ) + \frac{d}{2} \log \frac{\varepsilon}{T} - 2 d e^{4 L^2_g (T-\varepsilon)^2}, \]
which we can plug in \eqref{eq:StepNSC} as previously, together with \eqref{eq:LastConvS}.
Gathering pieces now yields
\[ \varphi^\varepsilon_t (x) - \varphi^{0}_t (x) \geq - 2 L_g \delta - \varepsilon (3 L^2_g + 2 L_g) - \frac{\varepsilon \tau d}{2\delta} + \frac{\varepsilon d}{2} \log ( 2 \pi \varepsilon \tau ) - \frac{\tau L_g^2}{2} - \frac{\varepsilon d}{2} \log (2 \pi \varepsilon ) + \frac{\varepsilon d}{2} \log \frac{\eta}{T} - 2 \varepsilon d e^{4 L^2_g T^2}, \]
for every $(t,x) \in [0,T] \times \R^d$. 
If $T - t - \varepsilon \geq \varepsilon$, we can choose $\delta = \tau = \varepsilon$ and conclude.
Otherwise, Lemma \ref{lem:TemrCont} applied to $\eta = 2 \varepsilon$ completes the proof.
\end{proof}

\section{Explicit solutions and optimality of the rate} \label{sec:Example}

In this section, we provide an example to justify that the rate $O(\varepsilon \log \varepsilon)$ cannot be improved, at least in dimension $d \geq 2$. 
In dimension $d=1$ a sharp example is provided in \cite[Proposition 4.4]{qian2024optimal}.
We rely on the explicit formula \eqref{eq:HopfC} and the Hopf-Lax formula, which corresponds to the $\varepsilon \rightarrow 0$ limit of \eqref{eq:HopfC}. 

\begin{proposition} \label{pro:example}
    For $1 \leq k  \leq d$ and $x= (x_1, \cdots,x_d) \in \R^d$, define the orthogonal  projection $P_k(x) := (x_1,\dots,x_k,0, \dots,0)$ into the first $k$ coordinates. Let $\varphi^{k,\varepsilon}$ and $\varphi^{k,0}$ be respectively the solutions to \eqref{eq:HJBeps} and \eqref{eq:HJB0} with terminal data $g_k = - |P_k(x)|$ and source term $f \equiv 0$. Then, for all $T >0$ and all $t \in [0,T)$ we have the expansion
     $$ \varphi^{k,\varepsilon}_t(0) = \varphi^{k,0}_t(0) + \frac{k-1}{2} \varepsilon \log \varepsilon  - \frac{k-1}{2} \varepsilon \log (T-t) - \varepsilon \log \frac{k \sqrt{\pi}}{ 2^{\frac{k-1}{2}} \Gamma(\frac{k}{2}+1)  } + o(\varepsilon).$$
\end{proposition}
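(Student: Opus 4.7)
The plan is to apply the Cole-Hopf formula \eqref{eq:HopfC} at $x=0$ with $g = g_k$, extract the Laplace-type asymptotics of the resulting integral as $\varepsilon \to 0$, and compare with the Hopf-Lax value of $\varphi^{k,0}_t(0)$. Splitting $y = (y',y'') \in \R^k \times \R^{d-k}$, one has $|P_k(y)| = |y'|$ and $|y|^2 = |y'|^2 + |y''|^2$, so the integral in \eqref{eq:HopfC} factorises: the $y''$-integral is a pure Gaussian and contributes $(2\pi\varepsilon(T-t))^{(d-k)/2}$, while polar coordinates in $\R^k$ reduce the $y'$-part to
\[ I_k(\varepsilon) := \omega_{k-1}\int_0^\infty r^{k-1}\exp\!\Big(\tfrac{r}{\varepsilon} - \tfrac{r^2}{2\varepsilon(T-t)}\Big)\,\d r, \qquad \omega_{k-1} := \tfrac{2\pi^{k/2}}{\Gamma(k/2)}. \]

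Setting $s := T-t$, I would then complete the square to rewrite the exponent as $s/(2\varepsilon) - (r-s)^2/(2\varepsilon s)$, so that the remaining integrand is a Gaussian of standard deviation $\sqrt{\varepsilon s}$ centred at the strictly positive point $r=s$, weighted by $r^{k-1}$. The contribution from $r<0$ is $O(e^{-c/\varepsilon})$, and rescaling $r = s + \sqrt{\varepsilon s}\,u$ together with a Taylor expansion of $r^{k-1}$ around $r=s$ leads to
\[ I_k(\varepsilon) = \omega_{k-1}\, e^{s/(2\varepsilon)}\, s^{k-1}\, \sqrt{2\pi\varepsilon s}\,(1 + O(\varepsilon)). \]
In parallel, the Hopf-Lax formula at $x=0$ reads $\varphi^{k,0}_t(0) = \min_y\{-|P_k(y)| + |y|^2/(2s)\}$, which is attained at $y''=0$, $|y'|=s$ with value $-s/2$.

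To finish, I would substitute both expansions into \eqref{eq:HopfC} and subtract $\varphi^{k,0}_t(0) = -s/2$: the contribution $-\varepsilon\log e^{s/(2\varepsilon)} = -s/2$ cancels precisely the $s/2$ coming from $-\varphi^{k,0}_t(0)$. Collecting the surviving factors from $(2\pi\varepsilon s)^{d/2}$ (prefactor of \eqref{eq:HopfC}), $(2\pi\varepsilon s)^{(d-k)/2}$ (Gaussian in $y''$) and $s^{k-1}\sqrt{2\pi\varepsilon s}$ (Laplace expansion of $I_k$), the $d$-dependence drops out and one is left with
\[ \varphi^{k,\varepsilon}_t(0) - \varphi^{k,0}_t(0) = \tfrac{k-1}{2}\varepsilon\log(2\pi\varepsilon s) - (k-1)\varepsilon\log s - \varepsilon\log\omega_{k-1} + o(\varepsilon), \]
which after expanding $\log(2\pi\varepsilon s)$ produces the leading $\tfrac{k-1}{2}\varepsilon\log\varepsilon - \tfrac{k-1}{2}\varepsilon\log(T-t)$; a short manipulation using $\Gamma(k/2+1) = (k/2)\Gamma(k/2)$ rewrites $-\varepsilon\log(\omega_{k-1}(2\pi)^{-(k-1)/2})$ as $-\varepsilon\log(k\sqrt\pi/(2^{(k-1)/2}\Gamma(k/2+1)))$. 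The only point requiring some care is checking that the multiplicative $(1+O(\varepsilon))$ error from the Laplace expansion yields $-\varepsilon\log(1+O(\varepsilon)) = O(\varepsilon^2) = o(\varepsilon)$ after the outer logarithm; this is routine because the Laplace minimiser $r=s$ is strictly positive and nondegenerate, which is precisely where the $\varepsilon\log\varepsilon$ scaling ultimately comes from.
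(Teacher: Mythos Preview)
Your proposal is correct and follows essentially the same route as the paper: both apply the Cole--Hopf formula \eqref{eq:HopfC} at $x=0$, split off the Gaussian $y''$-integral, pass to polar coordinates in $\R^k$, complete the square in the exponent, and rescale $r = s + \sqrt{\varepsilon s}\,u$ to reduce the question to the limit of $\int_{-\sqrt{s/\varepsilon}}^{\infty} e^{-u^2/2}(s+\sqrt{\varepsilon s}\,u)^{k-1}\,\d u$. Your constants match the paper's (your $\omega_{k-1}$ equals the paper's $C_k$ via $\Gamma(k/2+1)=(k/2)\Gamma(k/2)$), and your observation that the multiplicative $(1+O(\varepsilon))$ error becomes $O(\varepsilon^2)$ after the outer $-\varepsilon\log(\cdot)$ is exactly what justifies the $o(\varepsilon)$ remainder.
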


\begin{proof}
Using the Hopf-Lax formula, for all $(\tau,x) \in (0,T] \times \R^d$ we have
\[ \varphi_{T-\tau}^{k,0}(x) = \inf_{y \in \R^d} \bigl \{ -|P_k(y)| + \frac{1}{2\tau} |x-y|^2 \bigr \} = -|P_k(x)| - \frac{\tau}{2}.
\]
Moreover, the infimum is obtained for $y =  x + \tau \frac{P_k(x)}{\vert P_k (x) \vert}$ if $ P_k(x) \neq 0$ and for any $y = x + z$ with $z \in \mathrm{Span}(e_1,\dots,e_k)$ and $\vert z \vert = \tau$
such that $|P_k(y)| = \tau$ when $ P_k (x) = 0$. On the other hand, using \eqref{eq:HopfC} for all $(\tau,x) \in (0,T] \times \R^d$, 
\[ \varphi_{T-\tau}^{k, \varepsilon}(x) =  - \varepsilon \log \Bigl[ \int_{\R^d} e^{\frac{-1}{\varepsilon} \bigl( -|P_k(z+x)|+ \frac{1}{2 \tau} | z|^2 \bigr)} \frac{\d z}{(2 \pi \varepsilon \tau)^{d/2}} \Bigr] \]
Now we decompose $|z|^2 = |P_k(z)|^2 + |z-P_k(z)|^2$ and introduce the two variables $z' = (z_1,\dots,z_k)$ and $z''=(z_{k+1},\dots,z_d)$ to compute, with the slight abuse of notation $P_k(x) = (x_1, \dots,x_k) \in \R^k$,
\begin{align*}
 \varphi_{T-\tau}^{k, \varepsilon}(x) &= - \varepsilon \log \Bigl[ \int_{\R^d} e^{\frac{-1}{\varepsilon} \bigl( -|P_k(z+x)|+ \frac{1}{2 \tau} | P_k(z)|^2 \bigr)} e^{-\frac{1}{2\varepsilon\tau} |z-P_k(z)|^2} \frac{\d z}{(2 \pi \varepsilon \tau)^{d/2}} \Bigr] \\
    &= - \varepsilon \log \Bigl[ \int_{\R^k}  e^{\frac{-1}{\varepsilon} \bigl( -|P_k(x) +z'|+ \frac{1}{2 \tau} | z'|^2 \bigr)} \frac{\d z'}{(2 \pi \varepsilon \tau)^{k/2}} \int_{\R^{d-k}} e^{-\frac{1}{2\varepsilon\tau} |z''|^2} \frac{\d z''}{(2 \pi \varepsilon \tau)^{(d-k)/2}}\Bigr] \\
    &=- \varepsilon \log \Bigl[ \int_{\R^k}  e^{\frac{-1}{\varepsilon} \bigl( -|z'|+ \frac{1}{2 \tau} | z' - P_k(x)|^2 \bigr)} \frac{\d z'}{(2 \pi \varepsilon \tau)^{k/2}} \Bigr].
\end{align*}

For any $\tau \in (0,T]$, we expect that the rate deteriorates when $P_k (x) = 0$, since this is precisely where the map $ y \mapsto -|P_k(y)| + \frac{1}{2\tau}|x-y|^2$ has several minima. From now on we take $x=0$. We have $ \varphi^0_{T-\tau}(0) = -\tau/2$, while rewriting $-|y| + \frac{1}{2\tau}|y|^2 = \frac{1}{2\tau}(|y|-\tau)^2 - \frac{\tau}{2}$,
we get
$$ \varphi_{T-\tau}^{k,\varepsilon} (0) = - \frac{\tau}{2} + \frac{k}{2} \varepsilon \log ( 2 \pi \varepsilon\tau) -    \varepsilon \log \int_{\R^k} e^{-\frac{1}{\varepsilon} \frac{1}{2 \tau} (|y|- \tau)^2 } \d y . $$
By a change of variables for radial functions and the change of variable $s = \tfrac{r-\tau}{\sqrt{\varepsilon\tau}}$, we get
$$  \int_{\R^{k}}  e^{-\frac{1}{\varepsilon} \frac{1}{2 \tau} (|y|- \tau)^2 } \d y = C_k \int_0^{\infty} e^{ - \frac{1}{\varepsilon} \frac{1}{2 \tau} (r-\tau)^2 } r^{k-1} \d r = C_k \sqrt{\varepsilon \tau} \int_{ - \frac{\sqrt{\tau}}{\sqrt{\varepsilon}}}^{+\infty} e^{-s^2/2}( \sqrt{\varepsilon \tau} s + \tau)^{k-1} \d s, $$  
where $C_k = k \pi^{k/2} / \Gamma(k/2+1)$ is $k$ times the Lebesgue measure of the unit ball of $\R^k$. We then easily see that 
$$ \lim_{\varepsilon \rightarrow 0^+} \int_{ - \frac{\sqrt{\tau}}{\sqrt{\varepsilon}}}^{+\infty} e^{-s^2/2}( \sqrt{\varepsilon \tau} s + \tau)^{k-1} \d s = \sqrt{2\pi} \tau^{k-1}, $$
from which the result follows.
\end{proof}

\section*{Acknowledgements}
\addcontentsline{toc}{section}{Acknowledgement}

The first author is grateful to Professor Daniel Lacker for suggesting to him this elegant proof of Proposition \ref{pro:Regularising}-\ref{item:RegEnt}.

\printbibliography
\addcontentsline{toc}{section}{References}

\end{document}